\newtheorem{thm}{Theorem}[section]
\newtheorem{lem}[thm]{Lemma}
\newtheorem{prop}[thm]{Proposition}
\numberwithin{equation}{section}
\theoremstyle{definition}
\theoremstyle{remark}
\newtheorem{remark}[thm]{Remark}
\newcommand{\Z}{{\mathbb Z}}
\newcommand{\R}{{\mathbb R}}
\newcommand{\T}{{\mathbb T}}
\def\11{{\rm 1~\hspace{-1.4ex}l} }
\title
[NLS on $\R^d\times \T$ ]
{Well-posedness and scattering for NLS on $\R^d\times \T $
in the energy space}
\author[Nikolay Tzvetkov]{Nikolay~Tzvetkov}
\author[Nicola Visciglia]{Nicola Visciglia}
\address{D\'epartement de Math\'ematiques, Universit\'e de Cergy-Pontoise, 2, 
avenue Adolphe Chauvin, 95302 Cergy-Pontoise  
Cedex, France and Institut Universitaire de France}\email{nikolay.tzvetkov@u-cergy.fr}
\address{Dipartimento di Matematica, Universit\`a Degli Studi di Pisa, 
Largo Bruno Pontecorvo 5 I - 56127 Pisa. Italy}\email{ viscigli@dm.unipi.it}
\begin{document}

\begin{abstract}
We study the Cauchy problem and the large data $H^1$ scattering for 
energy subcritical NLS posed on $\R^d\times \T$. \end{abstract}

\maketitle

\section{Introduction}
In our previous work \cite{TV}, we considered the nonlinear Schr\"odinger equation on a product space $\R^d\times M$, where $M$ is a compact riemannian manifold.
We have seen this problem as a kind of  vector valued nonlinear Schr\"odinger equation on $\R^d$ and we were able to get small data scattering results
(cf. also \cite{HPTV} for small data modified scattering results). Our goal here is to extend this view point to a large data problem in the very particular case when $M$ is the one dimensional torus. 

Therefore, our aim in this paper is the study of the local (and global) well-posedness
and scattering 
of the following family of Cauchy problems:
\begin{equation}\label{NLS}
\begin{cases}
i\partial_t u - \Delta_{x,y} u + u|u|^{\alpha}=0, \ \  (t,x,y) \in \R\times \R^d\times \T,\
\  d\geq 1\\
u(0,x,y)=f(x,y)\in H^1_{x,y},
\end{cases}
\end{equation}
where:
$$
\Delta_{x,y}=\sum_{i=1}^d \partial_{x_i}^2+ \partial^2_{y}.
$$ 
Concerning the Cauchy theory we shall assume $0<\alpha<4/(d-1)$ and for scattering we assume
$4/d<\alpha<4/(d-1)$. Our first result deals with the Cauchy problem.
\begin{thm}\label{halflwpStrong}
Let $d\geq 1$ and $0<\alpha<4/(d-1)$ be fixed. Then
we have:
\begin{enumerate}
\item \label{1bos}for any initial datum $f \in H^{1}
_{x,y},$ the problem \eqref{NLS} has a
unique local solution 
\begin{align*}
u(t,x,y)\in {\mathcal C}( (-T, T);H^{1}_{x,y}),
\end{align*}
where $T=T(\|f\|_{H^1_{x,y}})>0$;
\item \label{3bos}the solution $u(t,x,y)$ can be extended globally in time.
\end{enumerate}
\end{thm}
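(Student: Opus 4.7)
The plan is to treat \eqref{NLS} as a semilinear Schrödinger equation posed on $\R^d$ with values in the Hilbert space $H^1(\T)$, which gives access to the full (global-in-time) Strichartz estimates on $\R^d$ while making essential use of the one-dimensional embedding $H^1(\T)\hookrightarrow L^\infty(\T)$ to absorb the nonlinearity.

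\textbf{Vector-valued Strichartz.} Since $e^{it\partial_y^2}$ commutes with $e^{it\Delta_x}$ and acts as an isometry on every $H^s(\T)$, the standard scalar Strichartz estimates on $\R^d$ lift without loss to $H^s_y$-valued versions: for every admissible pair $(q,r)$ with $2/q+d/r=d/2$ (outside the endpoint in $d=2$),
$$
\|e^{it\Delta_{x,y}}f\|_{L^q_t L^r_x H^s_y}\lesssim \|f\|_{L^2_x H^s_y},
$$
and a parallel bound holds for the Duhamel integral. This gives us the same Strichartz machinery as for NLS on $\R^d$, but with $H^1_y$ taking the place of $\C$.

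\textbf{Nonlinear estimate and contraction.} Using $H^1(\T)\hookrightarrow L^\infty(\T)$ together with the chain rule (or the fractional Leibniz rule for non-integer $\alpha$),
$$
\bigl\||u|^\alpha u\bigr\|_{H^1_y}\lesssim \|u\|_{L^\infty_y}^\alpha\|u\|_{H^1_y}\lesssim \|u\|_{H^1_y}^{\alpha+1}.
$$
One then picks an admissible pair $(q,r)$ so that, after applying Hölder in $(t,x)$ and a Sobolev embedding $W^{1,r}_x\hookrightarrow L^{\widetilde r}_x$ on $\R^d$ where needed, the dual Strichartz norm of $|u|^\alpha u$ is bounded by $T^{\theta}\|u\|_{X_T}^{\alpha+1}$ with $\theta>0$. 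The exponent count closes precisely in the range $\alpha<4/(d-1)$, which is the $H^1$-subcritical range on $\R^d\times\T$. A standard Picard iteration in a closed ball of
$$
X_T=\mathcal{C}\bigl([-T,T];H^1_{x,y}\bigr)\cap L^q\bigl([-T,T];W^{1,r}_x L^2_y\cap L^r_x H^1_y\bigr)
$$
then yields the unique local solution, with $T=T(\|f\|_{H^1_{x,y}})>0$ extracted from the small factor $T^\theta$.

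\textbf{Globalization.} The mass $\|u(t)\|_{L^2_{x,y}}^2$ and the energy
$$
E(u(t))=\tfrac12\|\nabla_{x,y}u(t)\|_{L^2_{x,y}}^2+\tfrac1{\alpha+2}\|u(t)\|_{L^{\alpha+2}_{x,y}}^{\alpha+2}
$$
are formally conserved along the flow of \eqref{NLS}, and the defocusing sign gives $\|u(t)\|_{H^1_{x,y}}\lesssim \|f\|_{H^1_{x,y}}+E(f)$ uniformly in $t$. Combined with the blow-up alternative coming from the local theory (the existence time depends only on the $H^1_{x,y}$-norm of the data), this extends the solution to the whole real line.

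\textbf{Main obstacle.} The delicate point is calibrating the vector-valued Strichartz exponents, the 1D Sobolev embedding on $\T$, and the Sobolev embedding on $\R^d$ so that the contraction closes in the full subcritical range $\alpha<4/(d-1)$; $4/(d-1)$ is exactly the $H^1$-critical exponent on $\R^d\times\T$, and it is the vector-valued point of view that makes this possible, since it avoids any appeal to the lossy Strichartz estimates on the torus factor.
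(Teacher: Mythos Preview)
Your overall strategy—treating \eqref{NLS} as an $H^1(\T)$-valued Schr\"odinger equation on $\R^d$ and exploiting $H^1(\T)\hookrightarrow L^\infty(\T)$—matches the paper's viewpoint, and the globalization via conservation laws is standard. However, the contraction step as written has a genuine gap in the range $4/d\leq\alpha<4/(d-1)$.

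The problem is the second inequality in your nonlinear estimate, $\|u\|_{L^\infty_y}^\alpha\|u\|_{H^1_y}\lesssim\|u\|_{H^1_y}^{\alpha+1}$. Once all $\alpha+1$ factors have been placed in $H^1_y$, the only control you have on $\|u(t,x,\cdot)\|_{H^1_y}$ in the $x$ variable is $L^r_x$ with $(q,r)$ an $L^2$-admissible pair; the Sobolev embedding $W^{1,r}_x\hookrightarrow L^{\tilde r}_x$ you invoke improves the $x$-integrability of $\|u\|_{L^2_y}$, not of $\|u\|_{H^1_y}$. The resulting H\"older bookkeeping is then exactly that of Proposition~\ref{strichclass}, which closes only for $\alpha<4/d$. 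Your assertion that ``the exponent count closes precisely in the range $\alpha<4/(d-1)$'' is therefore not supported by the sketch.

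The paper covers $4/d\leq\alpha<4/(d-1)$ through two ingredients absent from your outline. First, the fixed point is run in $L^q_tL^r_xH^{1/2+\delta}_y$ rather than in $L^q_tL^r_xH^1_y$: since $H^{1/2+\delta}_y\hookrightarrow L^\infty_y$ already suffices for the nonlinear estimate (Lemma~\ref{TzBil}), this frees almost half a derivative to be spent on the $x$ side, and $(q,r)$ is taken $\dot H^s_x$-admissible with $0<s<1/2$ (Proposition~\ref{StriProd}). Second, such a pair is no longer $L^2$-admissible, so the Duhamel term cannot be handled by standard dual Strichartz; the paper instead invokes the \emph{extended inhomogeneous} Strichartz estimates of Foschi (Proposition~\ref{StriProd2}), and the lengthy exponent algebra of Proposition~\ref{AlgebricCauchyfer} and Lemma~\ref{AlgebricCauchy} is precisely the verification that Foschi's compatibility conditions can be met across the full range. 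A secondary omission: your fixed-point argument yields uniqueness only in the auxiliary space $X_T$, whereas the statement asserts unconditional uniqueness in $\mathcal C((-T,T);H^1_{x,y})$; the paper supplies a separate argument for this.
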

\begin{remark}
Property \eqref{3bos} follows by \eqref{1bos} due to the  
defocusing character of the the nonlinearity (a standard approximation argument is needed to justify the energy conservation).
Hence, along the paper, we focus mainly on the proof of \eqref{1bos}, i.e. the existence of an unique local solution
for any given initial datum. 
We also notice that the proof of \eqref{1bos} in Theorem~\ref{halflwpStrong} 
works also for the focusing NLS.
\end{remark}
The proof of the local existence given by 
Theorem \ref{halflwpStrong} goes as follows. First we prove the existence of one unique solution in the space
\begin{equation}\label{space}
L^q_tL^r_xH^{1/2+}_y\cap {\mathcal C}_t(H^1_{x,y}),
\end{equation}
where $(q,r)$ are Strichartz $\dot H^{1/2-}$-admissible for the propagator
$e^{it\Delta_x}$.
It is of importance for our analysis that a $H^{1/2}$ sub-critical nonlinearity in dimension $d$ is a $H^1$ sub-critical nonlinearity in dimension $d+1$. 
Therefore at the $x$ level we perform an $H^{1/2-}$ analysis and at the $y$ level, we perform the (trivial) $H^{1/2+}$ analysis which at the end enables us to perform an $H^1$ theory in the full sub-critical range of the nonlinearity. Incorporating in a non-trivial way the $y$ dispersive effect in this analysis is a challenging problem. Its solution may allow to extend our analysis to higher dimensional $y$ dependence. 
A key tool in order to perform a fixed point argument in the space \eqref{space} are 
the inhomogenous Strichartz estimates associated with $e^{it\Delta_x}$
(see \cite{F}, \cite{Vi}). The second step is the proof of the unconditional uniqueness
in the space ${\mathcal C}_t(H^1_{x,y})$.
We underline that the proof of Theorem \ref{halflwpStrong} in the range of nonlinearity $0<\alpha<4/d$, can be obtained following \cite{TTV}, where it is not needed the use of inhomogeneous Strichartz estimates for $e^{it\Delta_x}$.

In the cases $d=1$ for every $\alpha>0$ 
and $d=2,3$ for the $H^1$-critical nonlinearity $\alpha=4/(d-1)$, 
the proof of Theorem \ref{halflwpStrong}
can also be deduced respectively from the analysis in \cite{BGT1} and \cite{IP}.
The main point in our approach is that it works 
in $\R^d\times \T$ for every $d\geq 1$ and moreover it gives some
crucial controls of space-time global norms which are of importance for the scattering analysis.

The main result of this paper concerns the long-time behavior of the solutions given by Theorem~\ref{halflwpStrong}. 
\begin{thm}\label{main}
Assume $d\geq 1$ and $4/d<\alpha<4/(d-1)$, $f(x, y)\in H^1_{x,y}$ and 
$u(t, x,y)\in {\mathcal C}(\R;H^1_{x,y})$ be the unique global solution to \eqref{NLS}.
Then there exist $f_{\pm} \in H^1_{x,y}$
such that
$$\lim_{t\rightarrow \pm \infty} \|u(t,x,y) - e^{-it\Delta_{x,y}} f_{\pm}\|_{H^1_{x,y}}=0.$$
\end{thm}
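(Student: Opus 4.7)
The plan is to follow the classical Morawetz-based scattering scheme, but adapted to the partially compact geometry of $\R^d\times\T$ where no dispersion is available in the $y$-direction. The overall architecture is: (i) prove a global-in-time a priori space-time estimate via an interaction-Morawetz type argument in the $x$ variable only, (ii) bootstrap it to global finiteness of the critical scattering norm from the Cauchy theory, and (iii) use Duhamel and Strichartz to show that $e^{-it\Delta_{x,y}}u(t)$ is Cauchy in $H^1_{x,y}$ as $t\to\pm\infty$.

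The heart of the matter is step (i). The key observation is that the zero Fourier mode in $y$ obstructs dispersion in that direction, so one must run the Morawetz argument in the $x$ variables only, treating $y$ as a passive variable. Following the interaction-Morawetz philosophy of Colliander-Keel-Staffilani-Takaoka-Tao, introduce the $y$-averaged density and momentum
$$\rho(t,x)=\int_{\T}|u(t,x,y)|^2\,dy,\qquad p(t,x)=\Im\int_{\T}\bar u\,\nabla_x u\,dy,$$
which satisfy the continuity equation $\partial_t\rho+\nabla_x\cdot p=0$ and a momentum balance in which the nonlinearity generates a positive (defocusing) term proportional to $\nabla_x\!\int_{\T}|u|^{\alpha+2}\,dy$. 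Plugging these into the standard interaction functional
$$I(t)=\int_{\R^d\times\R^d}|x-x'|\,\rho(t,x)\,\rho(t,x')\,dx\,dx'$$
(with the obvious one-dimensional modification when $d=1$), one derives a global a priori bound of the form
$$\int_{\R}\!\!\int_{\R^d}\!\!\int_{\T}|u|^{\alpha+2}\,w(x)\,dy\,dx\,dt\le C\bigl(\|u\|_{L^\infty_t H^1_{x,y}}\bigr)$$
for a suitable weight $w$, together with an auxiliary control of a space-time norm of $\rho$ itself. This is the step where the condition $\alpha>4/d$ enters essentially.

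For step (ii) I would combine this Morawetz bound with the Sobolev embedding $H^{1/2+}_y\hookrightarrow L^\infty_y$, interpolation against the $L^\infty_t H^1_{x,y}$ energy bound (which is global by Theorem~\ref{halflwpStrong}), and the inhomogeneous Strichartz estimate for $e^{it\Delta_x}$, to upgrade the Morawetz-controlled norm to the scattering-critical norm
$$\|u\|_{L^q_tL^r_xH^{1/2+}_y}<\infty$$
used in the local theory \eqref{space}, where $(q,r)$ is the $\dot H^{1/2-}$-admissible pair. Step (iii) is then routine: partition $\R$ into finitely many intervals on each of which this norm is small, and apply Duhamel together with the inhomogeneous Strichartz inequality to deduce that $e^{-it\Delta_{x,y}}u(t)$ is Cauchy in $H^1_{x,y}$, which identifies the asymptotic states $f_\pm$.

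The main obstacle I expect is in step (i)--(ii): the Morawetz computation yields control only of the $L^2_y$ quantity $\rho$, so substantial work is needed to convert this weak-in-$y$ estimate into the $H^{1/2+}_y$-strong scattering norm required by the Cauchy theory. The interpolation exploits the energy bound together with Littlewood--Paley decomposition in $y$, and the threshold $\alpha>4/d$ is precisely what makes the resulting exponents admissible; below it, no amount of interpolation recovers a scattering-critical norm from the Morawetz bound.
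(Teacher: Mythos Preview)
Your overall architecture---Morawetz in $x$ only, bootstrap to the scattering norm, then Duhamel---matches the paper. But the bridge from (i) to (ii) is where the paper's argument differs substantially from your sketch, and where your proposal is too vague to stand.

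First, the paper does not use the standard interaction functional with weight $|x-x'|$; for $d\geq 4$ that route yields control of negative-order quantities in $x$ (cf.\ Remark~\ref{car}), and it is unclear how to recover a useful space-time norm. Instead the paper uses $\varphi(x)=\langle x\rangle$ and, rather than contracting variables, extracts from the nonlinear term the localized-mass bound
\[
\int_{\R}\Big(\sup_{x_0}\int_{Q^d(x_0,r)\times\T}|u|^2\,dx\,dy\Big)^{\frac{\alpha+4}{2}}dt<\infty,
\]
valid for all $d\geq 1$. This is then fed into an argument by contradiction (not a direct interpolation): assuming $\|u(t)\|_{L^q_{x,y}}$ does not decay, a localized Gagliardo--Nirenberg inequality forces the mass in moving cubes to stay bounded below on disjoint time intervals of fixed length, contradicting the bound above. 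The output is a \emph{time-decay} statement $\|u(t)\|_{L^q_{x,y}}\to 0$, not a space-time bound.

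Second, the conversion from this $L^q_{x,y}$ decay to $L^r_xH^{1/2+\delta}_y$ decay is the content of Lemma~\ref{inter}; only then does a Cazenave--Weissler style continuity argument (Proposition~\ref{inpa}), using the smallness of $\|u\|_{L^\infty_{t>t_0}L^{\alpha d/2}_xH^{1/2+\delta}_y}$ for large $t_0$, close the global scattering norm. Your ``interpolation plus Littlewood--Paley'' description does not capture this indirect mechanism.

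Finally, your placement of the threshold $\alpha>4/d$ is incorrect: the Morawetz/decay step (Proposition~\ref{decay}) holds for all $0<\alpha<4/(d-1)$ with no lower bound on $\alpha$ (see Remark~\ref{subc}). The restriction $\alpha>4/d$ enters only in the algebraic compatibility of Strichartz exponents in the scattering analysis (Sections~6--7), specifically so that $s=\frac{\alpha d-4}{2\alpha}\in(0,\tfrac12)$ and the continuity argument closes.
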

\begin{remark}
Concerning scattering results for NLS in product spaces we quote \cite{HP}
where it is studied the quintic NLS on $\R\times \T^2$.
We also underline that
using the arguments of \cite{IP} (see also \cite{HP}) one may obtain that for $d=2,3$, the result of Theorem~\ref{main} also holds for the $H^1$ critical nonlinearity $\alpha=\frac{4}{d-1}$. One may also expect that these arguments provide an alternative (and more complicated) proof of Theorem~\ref{main} for $d=2,3$.
For $d\geq 4$, the extension of Theorem~\ref{main} to the  $H^1$ critical nonlinearity $\alpha=\frac{4}{d-1}$ is an open problem (even for the $H^1$ local theory). 

\end{remark}

\begin{remark}\label{thy} 
Notice that if one considers \eqref{NLS} on $\R^d\times \R$, then 
it is well-known that $H^1$-scattering is available for $4/(d+1)<\alpha<4/(d-1)$ (in contrast with 
Theorem~\ref{main} where we require the extra restriction $\alpha>4/d$). 
On the other hand the restriction $\alpha>4/d$ in Theorem~\ref{main}  is quite natural.
Indeed,  if we choose $f(x, y)=f(x)$ and $0<\alpha<4/d$
then the Cauchy problem \eqref{NLS} reduces
to $L^2$-subcritical NLS in $\R^d$, and at the best of our knowledge no
$H^1$-scattering result is available in this situation .
\end{remark}
%
It is well-known, since the very classical work \cite{GV}, that a key tool to prove scattering 
for NLS in the euclidean setting $\R^d$, with nonlinearities which are both energy subcritical 
and $L^2$-supercritical, is the proof of
the time-decay of the potential energy. 
In Proposition~\ref{decay} below we prove that this property persists  
for solutions to NLS on $\R^d\times \T$ in the energy subcritical regime (in particular
we do not need to require to the nonlinearity to be $L^2$-supercritical, see also Remark~\ref{subc} on this point).

A basic tool that we will use is a suitable version in the partially periodic setting
of the interaction Morawetz estimates,
first introduced in \cite{Iteam} to study the energy critical NLS in the euclidean space $\R^3$. Starting from this work the interaction Morawetz estimates have been exploited
in several other papers (\cite{CGT}, \cite{GVq}, \cite{PV}, \cite{tvz}, \cite{Vis}),
in particular they have been used to provide new and simpler proofs of the classical scattering results in \cite{GV} and \cite{Na}.

We emphasize that we make use of the interaction Morawetz estimates 
from a different point of view 
compared with the results above. In particular along the proof of Proposition
\ref{decay} below we are able to treat
in a unified and simple way NLS on $\R^d\times \T$ for every $d\geq 1$, without any distinction between
the cases $d\leq 3$ and $d>3$. 
This distinction is typical in previous papers involving interaction Morawetz estimates in the Euclidean setting (see Remark~\ref{gvd} for more details on this point).
Moreover it is unclear to us how to proceed, following the 
approach developed in previous papers  related with interaction Morawetz estimates, to prove Proposition~\ref{decay} in the case $d\geq 4$
(see Remark~\ref{car}).

Next we state the key proposition needed to prove Theorem~\ref{main}.

\begin{prop}\label{decay}
Let $u(t, x,y)\in {\mathcal C}(\R; H^1_{x,y})$ be a global solution to defocusing
NLS posed on $\R^d\times \T$ and with pure power nonlinearity $u|u|^{\alpha}$, 
with $
0<\alpha<4/(d-1)$. Then:
\begin{equation}\label{pas1}
\lim_{t\rightarrow \pm \infty} \|u(t, x,y)\|_{L^q_{x,y}}=0,\quad 2<q<\frac{2(d+1)}{d-1}.
\end{equation}
\end{prop}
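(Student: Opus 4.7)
My plan is to deduce the decay \eqref{pas1} from a partial interaction Morawetz estimate adapted to the product geometry $\R^d\times\T$, combined with a uniform continuity argument for the flow.

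Set $\rho(t,x):=\int_\T|u(t,x,y)|^2\,dy$ and view $u$ as an $L^2_y$-valued function on $\R\times\R^d$. I would run the interaction Morawetz computation with the weight $a(x-x')=|x-x'|$ depending only on the $\R^d$ variables. Since $a$ is independent of $y$, the $\partial_y$ contributions in the virial identity are manifestly nonnegative and can be discarded; the defocusing sign of the nonlinearity turns the potential term into a second positive contribution. Exploiting mass and energy conservation, this yields, uniformly in $d\geq 1$,
\begin{equation*}
\iiint_{\R\times\R^d\times\T} |u(t,x,y)|^{\alpha+2}\,\rho(t,x)\,dx\,dy\,dt \ \leq\ C\bigl(\|u\|_{L^\infty_t H^1_{x,y}}\bigr),
\end{equation*}
and Hölder in the compact $y$-variable, which gives $\int_\T|u|^{\alpha+2}\,dy\gtrsim \rho^{(\alpha+2)/2}$, converts this into the space-time integrability
\begin{equation*}
\|u\|_{L^{\alpha+4}(\R_t\times\R^d_x;\,L^2(\T_y))}^{\alpha+4} \ \leq\ C\bigl(\|u\|_{L^\infty_t H^1_{x,y}}\bigr).
\end{equation*}

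From here a pigeonhole argument produces sequences $t_n^{\pm}\to\pm\infty$ with $\|u(t_n^{\pm})\|_{L^{\alpha+4}_x L^2_y}\to 0$. To upgrade this to $\lim_{t\to\pm\infty}\|u(t)\|_{L^{\alpha+4}_x L^2_y}=0$, I would prove that $t\mapsto \|u(t)\|_{L^{\alpha+4}_x L^2_y}$ is uniformly continuous on $\R$ via Duhamel's formula and the inhomogeneous Strichartz estimates already used in Theorem~\ref{halflwpStrong}: on any short interval, the increment $u(t+\tau)-u(t)$ is controlled in $L^{\alpha+4}_x L^2_y$ by a quantity depending only on $\|u\|_{L^\infty_t H^1_{x,y}}$ and tending to zero with $\tau$, uniformly in $t$. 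The full range $q\in(2,2(d+1)/(d-1))$ in \eqref{pas1} is then obtained by interpolating the decay at this single exponent against the uniform $H^1_{x,y}$-bound and the Sobolev embedding $H^1(\R^d\times\T)\hookrightarrow L^{2(d+1)/(d-1)}_{x,y}$.

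The main obstacle is the Morawetz step. Classical interaction Morawetz on $\R^d$ controls $\rho$ in the homogeneous Sobolev seminorm of order $(3-d)/2$, which is negative and thus useless for $d\geq 4$. The entire point of using a weight depending only on the $\R^d$-variables in the product setting is to bypass this dimensional restriction: instead of a Sobolev bound on $\rho$, one obtains the positive mixed quantity $\iiint|u|^{\alpha+2}\rho$, which is available and usable in every dimension. This is precisely what should allow a unified proof across all $d\geq 1$, in contrast with the case distinctions typical in the Euclidean theory.
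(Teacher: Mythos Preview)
The gap is in the Morawetz step. With the weight $a(x-x')=|x-x'|$, the potential contribution in the interaction virial identity is
\[
\frac{2\alpha}{\alpha+2}\int_{\R}\int\!\!\int \Delta_x a(x_1-x_2)\,|u(t,x_1,y_1)|^{\alpha+2}\,|u(t,x_2,y_2)|^{2}\,dx_1dy_1dx_2dy_2\,dt,
\]
and for $d\geq 2$ one has $\Delta_x|x|=(d-1)/|x|$, not a Dirac mass. The variables $x_1,x_2$ therefore \emph{do not contract}, and you obtain only the nonlocal quantity
\[
\int_{\R}\int\!\!\int \frac{|u(t,x_1,y_1)|^{\alpha+2}\,|u(t,x_2,y_2)|^{2}}{|x_1-x_2|}\,dx_1dy_1dx_2dy_2\,dt<\infty,
\]
from which there is no way to extract $\iiint |u|^{\alpha+2}\rho$ or $\|u\|_{L^{\alpha+4}_{t,x}L^2_y}$. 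Your claimed bound holds only for $d=1$, where $\Delta|x|=2\delta$. The last paragraph of your proposal correctly identifies the $(3-d)/2$--derivative obstruction on the kinetic side, but the potential side suffers the same dimensional degeneracy: it becomes a Riesz potential pairing, not a local integral. This is precisely the difficulty the paper singles out in Remarks~1.9--1.11.

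The paper's cure is to replace $|x|$ by $\langle x\rangle$. Then $\Delta_x\langle x\rangle$ is smooth, nonnegative, and bounded below on any fixed cube, so the potential term controls
\[
\int_{\R}\Big(\sup_{x_0\in\R^d}\int_{Q^d(x_0,r)\times\T}|u(t,x,y)|^2\,dxdy\Big)^{\frac{\alpha+4}{2}}dt<\infty,
\]
a \emph{localized} estimate rather than a global Lebesgue bound. The decay is then obtained by contradiction: a localized Gagliardo--Nirenberg inequality (eq.~(5.14) in the paper) reduces failure of \eqref{pas1} to persistence of mass on moving cubes $Q^d(x_n,1)\times\T$ along a sequence $t_n\to\pm\infty$; the first virial identity \eqref{1} propagates this to time intervals of fixed length, contradicting the integral bound above. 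This replaces your uniform-continuity step and works uniformly in $d\geq 1$.
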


\begin{remark}\label{subc}
Notice that in contrast with Theorem~\ref{main}, in Proposition~\ref{decay} we do not assume any lower-bound on $\alpha$.
Notice also that Proposition~\ref{decay} is not true for the focusing NLS 
on $\R^d\times \T$ for $\alpha<4/d$, even if the
initial data are assumed to be arbitrarly small in $H^1_{x,y}$. To prove this fact one can think about the solitary waves associated with the $L^2$ subcritical focusing NLS posed on $\R^d$, and notice that the corresponding $H^1_{x,y}$ norm can be arbitrary small.
\end{remark}
\begin{remark}\label{gvd} As already mentioned above
the proof of Proposition \ref{decay} is based on the use of interaction Morawetz 
estimates in the partially periodic setting.
Let us recall that the interaction Morawetz estimates 
allow to control the following quantity
(see for instance \cite{GVq}): 
\begin{equation}\label{GVqinter}
\int_{\R}\int_{\R^d} \big ||D_x|^\frac {3-d}2 (|u|^2) |^2 dx dt<\infty\end{equation}
for $u$ solution to NLS posed in the euclidean space $\R^d$.
Notice that via the Sobolev embedding it implies
some a -priori bounds of the type
$$\|u(t,x,y)\|_{L^p_tL^q_x}<\infty$$ in the case $d=1,2,3$.
This estimate is sufficient to deduce scattering on $\R^d$ for $d=1,2,3$
in the case of the nonlinearity $4/d<\alpha<4/(d-2)$.
Notice also that in higher dimensions $d\geq 4$ we get in \eqref{GVqinter}
the control of a negative derivative of $|u|^2$. In this case some extra work
is needed in order to retrieve the needed space-time summability that allows to get 
scattering. Typically 
the main strategy to overcome this difficulty 
is to retrieve some informations on negative derivative of $u$ via the following estimate (see \cite{tvz}):
\begin{equation}\label{vtz}
\||D_x|^\frac {3-d}4 f\|_{L^4}^2 \leq C \||D_x|^\frac {3-d}2 |f|^2\|_{L^2}.
\end{equation}
Once a negative derivative of $u$ is estimated, then we can be interpolated with the bound $\|u\|_{L^\infty_t H^1_x}$,
and hence we get the needed space-time integrality necessary to prove scattering
for $d\geq 4$.
\end{remark}
\begin{remark}\label{car}
We underline that arguing as in \cite{ACD}, where it is studied NLS with a partially confining potential, one can prove the following version of interaction Morawetz estimate:
\begin{equation}\label{interpart}\int_\R \int_{\R^d} \big ||D_x|^\frac {3-d}2 (\int_{\T} |u(t,x,y)|^2 dy)\big |^2 dxdt<\infty
\end{equation}
provided that $u(t,x,y)$ solves NLS posed on $\R^d\times \T$.
Hence via the Sobolev embedding one can deduce 
some a -priori bounds
$$\|u(t,x,y)\|_{L^p_tL^q_xL^2_y}<\infty$$ in the case $d=1,2,3$. 
This estimate is sufficient to deduce scattering for $4/d<\alpha<4/(d-1)$
and $d=1,2,3$ (see the computations
in \cite{ACD} in the case of a partially confining potential).
However, as far as we can see, it is unclear how to exploit \eqref{interpart} in the case $d\geq 4$. 
\end{remark}

\begin{remark}
Estimate \eqref{interpart} is obtained by controlling 
a suitable family of multiple integrals of the type:
$$\int_\R\int_{\R^d}\int_{\R^d}\int_{\T}\int_{\T} ..... dx_1dx_2dy_1 dy_2 dt,$$
where the integrand function depends on a test function $\varphi$ 
and on the solution $u$ to NLS . Once this test function
is suitably  choosen then it allows to contract the variables $x_1$, $x_2$, $y_1, y_2$ 
to $x,y$, hence we get \eqref{interpart}.
The main point in our analysis is that we combine an argument by the absurd in conjunction
with  the finiteness of the following quantity 
\begin{align}\label{inter167}
\int_{\R} &\big ( \sup_{x_0\in \R^d} \int\int_{Q^d(x_0,r)\times (0, 2\pi)}
|u(t, x, y)|^{2}dxdy\big )^\frac{\alpha+4}2 dt<\infty
\end{align}
that in turn follows by
\begin{equation}\label{inter168}\int_\R \int_{\R^d}\int_{\R^d}\int_{\T}\int_{\T} 
\Delta_x \varphi (x_1-x_2)|u(x_1, y_1)|^{\alpha+2} |u(x_2, y_2)|^2 dt dx_1dy_1 dx_2dy_2<\infty,
\end{equation}
where $\varphi$ is any convex function. In this estimate we choose
$\varphi=\langle x \rangle$. Notice that this choice
does not allows contraction of the variables
$(x_1, x_2, y_1, y_2)$ in $(x,y)$,
however it implies \eqref{inter167}, which is sufficient to conclude the time 
decay of the potential energy for solutions to NLS in a simpler way 
and in a more general setting compared with \eqref{interpart}. 
We believe that this part of our argument is of independent interest.  
\end{remark}
\section{Some Useful Functional Inequalities}\label{foschi}
In this section we collect some a-priori estimates 
for the propagator $e^{ -i t \Delta_{x,y}}$
and the associated Duhamel operator.
At the end we also present an anisotropic 
Gagliardo-Nirenberg inequality that will be useful in the sequel.\\
We define $H^{s}_{x}H^{\gamma}_{y}$ as
$$
H^{s}_{x}H^{\gamma}_{y}=(1-\Delta_x)^{-\frac s2} (1-\partial^2_y)^{-\frac \gamma2}L^2_{x,y},
$$
endowed with the natural norm.
\begin{prop}\label{StriProd}
Let $\gamma\in\R,$ $0\leq s<d/2$ and $d\geq 1$. Then we have the following homogeneous estimates:
\begin{equation}
\label{eq:200p}
 \|e^{- i t \Delta_{x,y}} f\|_{L^q_t L^r_xH_y^{\gamma}}\leq C\|f\|_{ H_x^{s}H_y^{\gamma}},
 \end{equation}
provided that the following conditions hold:
 \begin{equation}
\label{eq:201p}  \frac{2}{q}+\frac{d}{r}= \frac{d}{2}-s, \quad \quad q\geq 2, 
\quad \quad (q,d)\neq (2,2).
\end{equation}
\end{prop}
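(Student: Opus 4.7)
The plan is to exploit the tensor product structure of the operator. Since $\Delta_{x,y} = \Delta_x + \partial_y^2$ splits into commuting pieces, writing $f$ as a Fourier series in $y\in \T$,
\begin{equation*}
f(x,y) = \sum_{k\in\Z} \widehat{f}_k(x)\, e^{iky},
\end{equation*}
we obtain
\begin{equation*}
e^{-it\Delta_{x,y}} f(x,y) \;=\; \sum_{k\in\Z} e^{itk^2} \bigl(e^{-it\Delta_x} \widehat{f}_k\bigr)(x)\, e^{iky}.
\end{equation*}
Because $|e^{itk^2}|=1$, taking the $H^\gamma_y$-norm of the left-hand side at fixed $(t,x)$ produces
\begin{equation*}
\|e^{-it\Delta_{x,y}}f(t,x,\cdot)\|_{H^\gamma_y}^2 \;=\; \sum_{k\in\Z} \langle k\rangle^{2\gamma}\,\bigl|\bigl(e^{-it\Delta_x}\widehat{f}_k\bigr)(x)\bigr|^2.
\end{equation*}

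Next I would move the $\ell^2_k$-sum outside the $L^q_tL^r_x$-norm by Minkowski's inequality. The admissibility assumptions ensure $r\geq 2$: from $2/q+d/r = d/2-s$ together with $s\geq 0$ and $q\geq 2$, one reads $d/r\leq d/2$, hence $r\geq 2$. Combined with $q\geq 2$, Minkowski's inequality yields
\begin{equation*}
\bigl\|e^{-it\Delta_{x,y}} f\bigr\|_{L^q_t L^r_x H^\gamma_y} \;\leq\; \left(\sum_{k\in\Z} \langle k\rangle^{2\gamma} \bigl\|e^{-it\Delta_x}\widehat{f}_k\bigr\|_{L^q_t L^r_x}^2\right)^{1/2}.
\end{equation*}

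Finally, for each $k$ I apply the classical Strichartz estimate on $\R^d$ in the $\dot H^s$-subcritical range: under the scaling condition $2/q+d/r=d/2-s$ with $0\leq s<d/2$, $q\geq 2$ and $(q,d)\neq (2,2)$, one has $\|e^{-it\Delta_x} g\|_{L^q_t L^r_x}\leq C\|g\|_{\dot H^s_x}\leq C\|g\|_{H^s_x}$. This is the Keel–Tao endpoint estimate (for $s=0$) combined with the standard Sobolev lifting in the $x$ variable; the exclusion $(q,d)\neq (2,2)$ is exactly the known endpoint obstruction. Plugging in this estimate and using Plancherel in $y$ gives
\begin{equation*}
\bigl\|e^{-it\Delta_{x,y}} f\bigr\|_{L^q_t L^r_x H^\gamma_y} \;\leq\; C\left(\sum_{k\in\Z} \langle k\rangle^{2\gamma} \|\widehat{f}_k\|_{H^s_x}^2\right)^{1/2} \;=\; C\|f\|_{H^s_x H^\gamma_y}.
\end{equation*}
The only delicate point is the use of Minkowski, which needs $\min(q,r)\geq 2$; as noted, this follows from the hypotheses. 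The rest is a black-box application of Strichartz on $\R^d$, since the $\T$-direction is treated trivially by unitarity of $e^{-it\partial_y^2}$ on each Fourier mode.
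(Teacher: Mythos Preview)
Your proof is correct and follows essentially the same route as the paper: both reduce to the scalar Strichartz estimate $\|e^{-it\Delta_x}h\|_{L^q_tL^r_x}\leq C\|h\|_{H^s_x}$ on $\R^d$ (obtained from Keel--Tao plus Sobolev embedding), and then pass to the $H^\gamma_y$-valued setting via the tensor structure. The only cosmetic difference is that the paper first proves the $\gamma=0$ case by citing the vector-valued extension argument of \cite{TV} and then commutes with $(1-\partial_y^2)^{\gamma/2}$, whereas you unwind this directly via the Fourier series in $y$ together with Minkowski (legitimate since $q,r\geq 2$); these are the same argument written two ways.
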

\begin{proof} We claim that by combining the Sobolev embedding 
with the usual Strichartz estimates on $\R^d$ we get
\begin{equation}
\label{starlit}
 \|e^{ -i t \Delta_{x}} h\|_{L^q_t L^r_x}\leq C\|h\|_{ H_x^{s}},
 \end{equation}
where $q,r,s$ are as in the assumptions. 
By the same argument as in \cite{TV} the estimate \eqref{starlit}
implies $$\|e^{ -i t \Delta_{x,y}} f\|_{L^q_t L^r_xL^2_y}\leq C\|f\|_{ H_x^{s}L^2_y}.$$
We can conclude by using the fact that $(\sqrt {1-\partial_y^2})^\gamma$
commute with the linear Schr\"odinger equation on $\R^d\times \T$.\\
Next we give a few details about the proof of \eqref{starlit}.
Given any $q\geq 2$ for $d\geq 3$ (or $q>2$ for $d=2$, $q\geq 4$ for $d=1$) we fix the unique $2\leq \tilde r<\infty$ such that
\begin{equation}\label{confgh}\frac 2q+ \frac d{\tilde r}=\frac d2.\end{equation}
Hence by the usual Strichartz estimates (see \cite{KT}) we get:
\begin{equation*}
 \|e^{ -i t \Delta_{x}} h\|_{L^q_t L^{\tilde r}_x}\leq C\|h\|_{L^2_x}.
 \end{equation*}
In turn it implies
\begin{equation*}
 \|e^{ -i t \Delta_{x}} h\|_{L^q_t W^{s,\tilde r}_x}\leq C\|h\|_{H^s_x}.
 \end{equation*} 
Notice that if $s\cdot \tilde r<d$ then we conclude by the sharp Sobolev embedding
$W^{s, \tilde r}_x\subset L^r_x$ (here $r$ is precisely the one that appears in
\eqref{eq:201p} once $q$ and $s$ are fixed).
In the case $s \cdot \tilde r\geq d$ we conclude again by the Sobolev embedding 
$W^{s, \tilde r}_x\subset L^p_x$ for every $\tilde r\leq p<\infty$, and 
in particular $W^{s, \tilde r}_x\subset L^r_x$.
 
\end{proof}

\begin{prop}\label{StriProd1}
Let  $\gamma\in \R$ and $d\geq 1$. Indicate by $D$ both $\partial_{x_j}$, $j=1,..,d$ and $\partial_y.$ 
Then we have for $k=0,1$ the following estimates:
\begin{align}
\label{eq:200p1}
 \|D^k e^{ -i t \Delta_{x,y}} f\|_{L^l_t L^p_xH^\gamma_y} + \left \|D^k \int_0^t e^{- i (t-\tau) \Delta_{x,y}} F(\tau)
d\tau\right\|_{L^l_t L^p_x H^\gamma_y}&\\\nonumber
\leq C\big (\|D^k f\|_{ L^2_xH^\gamma_y}+\|D^kF\|_{L^{\tilde l'}_t
L^{\tilde p'}_xH^\gamma_y}\big ),&
 \end{align}
 provided that
 \begin{equation*}
 \frac{2}{l}+\frac{d}{p}=\frac{2}{\tilde l}+\frac{d}{\tilde p}= \frac{d}{2}, \quad \quad 
 l\geq 2, \quad (l,2)\neq (2,2).
 \end{equation*}
\end{prop}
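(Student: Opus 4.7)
The plan is to reduce the estimate, via commutativity and Fourier analysis in $y\in\T$, to standard Strichartz estimates on $\R^d$ — specifically, Proposition~\ref{StriProd} (taken with $s=0$) for the homogeneous piece and the Keel--Tao inhomogeneous Strichartz (\cite{KT}) for the Duhamel piece.

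First I would remove the derivatives and the $H^\gamma_y$ weight. Each $D\in\{\partial_{x_1},\dots,\partial_{x_d},\partial_y\}$ has constant coefficients, so it commutes with $\Delta_{x,y}$ and hence with both $e^{-it\Delta_{x,y}}$ and the Duhamel integral; the same holds for the Fourier multiplier $(1-\partial_y^2)^{\gamma/2}$. Applying the claimed estimate to $(1-\partial_y^2)^{\gamma/2}D^k f$ and $(1-\partial_y^2)^{\gamma/2}D^k F$ thus reduces matters to the case $k=0$, $\gamma=0$:
\begin{equation*}
\|e^{-it\Delta_{x,y}}f\|_{L^l_t L^p_x L^2_y} + \Big\|\int_0^t e^{-i(t-\tau)\Delta_{x,y}}F(\tau)\,d\tau\Big\|_{L^l_t L^p_x L^2_y} \leq C\big(\|f\|_{L^2_{x,y}} + \|F\|_{L^{\tilde l'}_t L^{\tilde p'}_x L^2_y}\big).
\end{equation*}

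Next, I would split the propagator as $e^{-it\Delta_{x,y}}=e^{-it\Delta_x}\,e^{-it\partial_y^2}$ and expand in Fourier series in $y\in\T$. For each mode $n\in\Z$, the $n$-th Fourier coefficient solves the Schr\"odinger equation on $\R^d$ up to the unit-modulus multiplier $e^{itn^2}$, which does not affect any $L^r_x$ norm. For the homogeneous piece, Minkowski's inequality (valid because $l,p\geq 2$) yields
\begin{equation*}
\|e^{-it\Delta_{x,y}}f\|_{L^l_t L^p_x L^2_y}\leq \|e^{-it\Delta_{x,y}}f\|_{L^2_y L^l_t L^p_x};
\end{equation*}
Plancherel in $y$ converts $\|\cdot\|_{L^2_y}$ into $\|\cdot\|_{\ell^2_n}$, Proposition~\ref{StriProd} with $s=0$ supplies a uniform-in-$n$ bound by $\|\hat f(\cdot,n)\|_{L^2_x}$, and a second use of Plancherel returns $\|f\|_{L^2_{x,y}}$ on the right-hand side.

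For the inhomogeneous piece, the same decomposition turns the Duhamel operator on $\R^d\times\T$ into a family of Duhamel operators on $\R^d$ (indexed by $n$), to each of which the Keel--Tao inhomogeneous Strichartz estimate applies in $L^l_t L^p_x\!\to\! L^{\tilde l'}_t L^{\tilde p'}_x$ with a constant independent of $n$. After squaring, summing in $n$, and applying Plancherel once more, a final use of Minkowski to move $L^2_y$ inside $L^{\tilde l'}_t L^{\tilde p'}_x$ (valid since $\tilde l',\tilde p'\leq 2$) concludes the argument. The only technical care required is to verify that both admissible pairs $(l,p)$ and $(\tilde l,\tilde p)$ lie in the Keel--Tao admissible range in dimension $d$, which is precisely the content of the stated scaling relations and the endpoint exclusion; no deeper obstacle arises.
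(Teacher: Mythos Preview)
Your proposal is correct and is precisely the approach the paper has in mind: the paper's own proof is a one-line reference to ``the Strichartz estimates associated with the propagator $e^{-it\Delta_x}$, in conjunction with the argument in \cite{TV}'', and what you have written is exactly that argument spelled out (Fourier decomposition in $y$, uniform-in-$n$ Keel--Tao Strichartz on $\R^d$, Minkowski to transfer $L^2_y$ in and out, and commutation of $(1-\partial_y^2)^{\gamma/2}$ and $D^k$ with the flow). There is nothing to add.
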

\begin{proof}
The proof follows by the Strichartz estimates 
associated with the propagator
$e^{-it\Delta_x}$, in conjunction with the argument in \cite{TV}. 
\end{proof}

\begin{prop}\label{StriProd2}
Let $\gamma\in \R$ be fixed and $d\geq 3$. Then we have the following extended inhomogeneous estimates:
\begin{equation}
 \label{eq:202p}
 \left \|\int_0^t e^{- i (t-\tau) \Delta_{x,y}} F(\tau)
d\tau\right\|_{L^q_t L^r_xH_y^{\gamma}}\leq C\|F\|_{L^{\tilde{q}'}_t
L^{\tilde{r}'}_xH_y^{\gamma}}
\end{equation}
provided that:
\begin{equation}\label{nodfos}
0<\frac 1q,\frac 1r, \frac 1{\tilde q}, \frac 1{\tilde r}<\frac 12
\end{equation}
\begin{align}
\label{eq.ass3v3ca}
\frac 1 q+ \frac 1{\tilde q}<1, \quad \quad 
& \frac{d-2}{d}<\frac{r}{\tilde{r}}< \frac{d}{d-2}\\
\label{martfriben}
\frac{1}{q}+\frac{d}{r}<\frac{d}{2}, \quad \quad & 
\frac{1}{\tilde{q}}+\frac{d}{\tilde{r}}<\frac{d}{2},\quad \quad\frac{2}{q} + \frac dr + \frac{2}{\tilde{q}} +\frac d{\tilde r}=d.
\end{align} 
The same conclusion holds for $d=1,2$ provided that we drop the conditions
\eqref{eq.ass3v3ca}.
\end{prop}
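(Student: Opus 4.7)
The plan is to reduce \eqref{eq:202p} to the corresponding inhomogeneous Strichartz estimate for the Schr\"odinger propagator $e^{-it\Delta_x}$ on $\R^d$, which under the non-sharp admissibility conditions \eqref{nodfos}--\eqref{martfriben} is due to Foschi \cite{F} and Vilela \cite{Vi}, and then to lift the resulting scalar bound to the partially periodic setting by the same vector-valued device already used in \cite{TV} and invoked in the proofs of Propositions~\ref{StriProd} and \ref{StriProd1}.

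Concretely, the proof would proceed in three steps. \textbf{Step 1.} Recall the scalar Foschi--Vilela estimate
$$\left\|\int_0^t e^{-i(t-\tau)\Delta_x} h(\tau)\, d\tau\right\|_{L^q_t L^r_x} \leq C\|h\|_{L^{\tilde q'}_t L^{\tilde r'}_x}$$
for scalar $h=h(t,x)$, under the stated hypotheses on $(q,r,\tilde q,\tilde r)$. \textbf{Step 2.} Observe that $e^{-it\Delta_{x,y}}=e^{-it\Delta_x}\circ e^{-it\Delta_y}$ and that $e^{-it\Delta_y}$ is a unitary operator on $L^2_y$ which commutes with $e^{-it\Delta_x}$; hence one may view $F(\tau,x,\cdot)$ as an $L^2_y$-valued function of $(\tau,x)$. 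The proof of the scalar Foschi--Vilela estimate, based on the dispersive decay $\|e^{-it\Delta_x}\|_{L^1_x\to L^\infty_x}\leq C|t|^{-d/2}$ together with the $TT^*$ identity and bilinear real interpolation, transports verbatim to Hilbert-space-valued inputs, because the dispersive decay factors through Plancherel in $y$ and so holds isometrically in $L^2_y$. This yields the case $\gamma=0$:
$$\left\|\int_0^t e^{-i(t-\tau)\Delta_{x,y}} F(\tau)\, d\tau\right\|_{L^q_t L^r_x L^2_y} \leq C\|F\|_{L^{\tilde q'}_t L^{\tilde r'}_x L^2_y}.$$
\textbf{Step 3.} For arbitrary $\gamma\in\R$, apply the previous bound to $(1-\partial_y^2)^{\gamma/2}F$ and commute the Bessel potential $(1-\partial_y^2)^{\gamma/2}$ through the Duhamel integral, which is legitimate since $\partial_y^2$ commutes with $\Delta_{x,y}$; this converts the $L^2_y$ norms into $H^\gamma_y$ norms and delivers \eqref{eq:202p}.

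The main technical point is the vector-valued upgrade in Step 2. For the classical (sharp) Strichartz-admissible range, as in Propositions~\ref{StriProd} and \ref{StriProd1}, this upgrade is completely routine, but the extended non-sharp admissibility region here relies on Foschi--Vilela's bilinear real interpolation, which must be run with Hilbert-valued endpoints. Fortunately the relevant bilinear form, built from $\langle e^{-i(t-s)\Delta_x}f(s,\cdot),\, g(t,\cdot)\rangle_{L^2_x}$, retains both its dispersive and energy bounds when $f,g$ take values in the Hilbert space $L^2_y$, so the scalar proof goes through without any change. The last sentence of the statement, asserting the low-dimensional case $d=1,2$ without the acceptability condition \eqref{eq.ass3v3ca}, then reduces to the well-known fact that in those dimensions this constraint is vacuous within the Foschi--Vilela framework.
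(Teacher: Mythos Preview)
Your proposal is correct and follows the same high-level strategy as the paper: invoke the scalar Foschi--Vilela inhomogeneous Strichartz estimate on $\R^d$, extend it to $y$-dependent data by the vector-valued device of \cite{TV}, and then commute $(1-\partial_y^2)^{\gamma/2}$ through the Duhamel operator to pass from $L^2_y$ to $H^\gamma_y$.

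The one point on which you and the paper differ is the mechanism of the vector-valued upgrade in Step~2, and this is exactly where condition~\eqref{nodfos} enters. The paper treats the scalar Foschi--Vilela estimate as a black box and lifts it to $L^2_y$-valued inputs \`a la \cite{TV}: expand $F=\sum_n F_n(t,x)e^{iny}$, apply the scalar bound to each mode (absorbing the unimodular phase $e^{i(t-\tau)n^2}$), and recombine in $\ell^2_n$ via Minkowski. That last step requires $q,r\geq 2$ on the output side and $\tilde q',\tilde r'\leq 2$ on the input side, i.e.\ precisely \eqref{nodfos}; the paper explicitly underlines that this hypothesis is absent from \cite{F} and is forced only by the lifting. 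Your Step~2 instead re-runs Foschi's bilinear real interpolation directly with $L^2_y$-valued inputs, using that the pointwise kernel bound for $e^{it\Delta_x}$ immediately gives the dispersive estimate $L^1_x(L^2_y)\to L^\infty_x(L^2_y)$. This is legitimate and in fact would dispense with \eqref{nodfos} altogether, yielding a slightly stronger statement, at the price of reopening Foschi's argument rather than quoting it.
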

\begin{proof}
In the case that $f$ and $F$ do not depend on $y$, the estimates above are special cases of the inhomogeneous extended Strichartz estimates proved in Thm. 1.4 \cite{F}
(see also \cite{Vi}).
Its extension to the case that we have explicit dependence on $y$ (in $f$ and/or $F$)
follows arguing as in \cite{TV}.
We underline that in order to apply the technique in \cite{TV}
we need \eqref{nodfos},  which is not required in \cite{F}.
\end{proof}
\begin{remark}
The interest of using the estimates of \cite{F} is that it allows to avoid to differentiate at a fractional order the nonlinearity $|u|^\alpha u$ with respect to the $x$ variable.
Therefore the only fractional Leibniz rule we need is Lemma~\ref{TzBil} below.
\end{remark}
The following result will be useful in the sequel.
\begin{lem}\label{inter}
Let $d\geq 1$ and $u_n(x,y)$ be a sequence such that $\|u_n\|_{H^1_{x,y}}=O(1)$, $\|u_n\|_{L^{p}_{x,y}}=o(1)$
for some $2<p<\infty$. Then for every $2<r<\frac{2d}{d-1}$ there exists $\delta>0$ such that
$\|u_n\|_{L^r_{x} H^{1/2+\delta}_y}=o(1)$.
\end{lem}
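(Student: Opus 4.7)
The plan is to combine the smallness in $L^p_{x,y}$ with the boundedness in $H^1_{x,y}$ via complex interpolation of mixed-norm Lebesgue-Sobolev (Bochner) spaces.

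Introduce $h_n(x):=\|u_n(x,\cdot)\|_{L^2_y}$. The pointwise bound $|\nabla_x h_n(x)|\le \|\nabla_x u_n(x,\cdot)\|_{L^2_y}$ together with integration gives $\|h_n\|_{H^1_x}\le \|u_n\|_{H^1_{x,y}}=O(1)$, whence the Sobolev embedding on $\R^d$ yields $\|u_n\|_{L^{q_*}_x L^2_y}=\|h_n\|_{L^{q_*}_x}=O(1)$, where $q_*=\frac{2d}{d-2}$ for $d\ge 3$ (and $q_*$ can be taken any finite exponent for $d=2$, or $q_*=\infty$ for $d=1$). On the other hand, since $\T$ has finite measure and $p\ge 2$, H\"older in $y$ gives $\|u_n\|_{L^p_x L^2_y}\le C\|u_n\|_{L^p_{x,y}}=o(1)$. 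Interpolating the $L^{r_0}_x$ norm of $h_n$ (between $L^2_x$ or $L^{q_*}_x$ on the bounded side and $L^p_x$ on the small side) transfers the $o(1)$ smallness to yield $\|u_n\|_{L^{r_0}_x L^2_y}=o(1)$ for every $r_0\in(2,q_*)$.

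The key step is complex interpolation of the two endpoints $L^2_x(H^1_y)$ (bounded) and $L^{r_0}_x(L^2_y)$ (small). The standard identity $[L^{p_0}(X_0),L^{p_1}(X_1)]_\theta=L^{p_\theta}([X_0,X_1]_\theta)$ for Bochner spaces, together with $[H^1_y,L^2_y]_\theta=H^{1-\theta}_y$, gives for $\theta=\tfrac12-\delta$:
\begin{equation*}
\|u_n\|_{L^{r_\theta}_x H^{1/2+\delta}_y}\le C\,\|u_n\|_{L^2_x H^1_y}^{1/2+\delta}\,\|u_n\|_{L^{r_0}_x L^2_y}^{1/2-\delta}=O(1)\cdot o(1)^{1/2-\delta}=o(1),
\end{equation*}
with $\tfrac{1}{r_\theta}=\tfrac{1/2+\delta}{2}+\tfrac{1/2-\delta}{r_0}$.

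Finally, given $2<r<\tfrac{2d}{d-1}$, solving this relation for $r_0$ with $r_\theta=r$ shows $r_0\to \tfrac{2r}{4-r}$ as $\delta\to 0^+$. A short algebraic computation yields $\tfrac{2r}{4-r}<q_*$ precisely when $r<\tfrac{2d}{d-1}$, so a sufficiently small $\delta>0$ places $r_0$ in the admissible interval $(2,q_*)$, completing the argument. The main obstacle is that we cannot anchor the bounded endpoint at $L^{r_1}_x H^1_y$ for any $r_1>2$ without an $H^2_{x,y}$ control on $u_n$; hence we must remain at the $L^2_x$ end and pay in the $y$-regularity with the factor $1/2+\delta$, which is exactly why the statement produces $H^{1/2+\delta}_y$ rather than $H^1_y$.
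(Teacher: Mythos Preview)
Your argument is clean and correct for $d\ge 2$, and it is genuinely different from the paper's: instead of the paper's Gagliardo--Nirenberg step in $y$ and the Fourier-side endpoint bound $\|v\|_{L^{2d/(d-1)}_x H^{1/2-\gamma}_y}\lesssim\|v\|_{H^1_{x,y}}$, you use the diamagnetic-type inequality for $h_n(x)=\|u_n(x,\cdot)\|_{L^2_y}$ together with a single complex interpolation of Bochner spaces. This is more conceptual and shorter.

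However, there is a genuine gap for $d=1$. Your interpolation reads
\[
\frac{1}{r}=\frac{1/2+\delta}{2}+\frac{1/2-\delta}{r_0},
\]
so necessarily $\tfrac{1}{r}>\tfrac{1/2+\delta}{2}>\tfrac14$, i.e.\ $r<4$, regardless of how large $r_0$ is. For $d=1$ the target range is $2<r<\tfrac{2d}{d-1}=\infty$ (and in the scattering section the lemma is applied with $r=\alpha d/2=\alpha/2$, which for $d=1$ and $\alpha>4$ exceeds $2$ and can be arbitrarily large). Your sentence ``$\tfrac{2r}{4-r}<q_*$ precisely when $r<\tfrac{2d}{d-1}$'' is therefore false for $d=1$: with $q_*=\infty$ the left-hand side is a valid (positive, finite) exponent only when $r<4$. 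The obstruction is exactly the one you identify at the end --- the bounded endpoint is pinned at $L^2_x H^1_y$ --- but you did not notice that this caps $r$ at $4$ in one dimension.

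The paper circumvents this by first upgrading the \emph{small} side to carry $y$-regularity above $1/2$: via Gagliardo--Nirenberg in $y$ and H\"older in $x$ it obtains $\|u_n\|_{L^{r_0}_x H^{s_0}_y}=o(1)$ with $s_0=\tfrac{4d-1}{4d}>\tfrac12$, and then interpolates this against the bounded endpoint $\|v\|_{L^{2d/(d-1)}_x H^{1/2-\gamma}_y}\lesssim\|v\|_{H^1_{x,y}}$, which for $d=1$ sits at $L^\infty_x$ and hence reaches all $r<\infty$. To repair your proof for $d=1$ you need some analogue of this step; the $L^{r_0}_x L^2_y$ smallness alone is not enough.
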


\begin{proof}
By combining the assumption with the Sobolev embedding we obtain that 
\begin{align}\label{allq}\|u_n\|_{L^{q}_{x,y}}=o(1) \quad & \forall \,\,2<q<\infty
\hbox{ for } d=1,2, \\\nonumber &\forall\,\,
2<q<\frac{2d}{d-2} \hbox{ for } d\geq 3.\end{align}
First we prove the following estimate that will be useful in the sequel:
\begin{equation}\label{lysi}\forall \gamma, \quad \quad \exists C=C(\gamma)>0 \hbox{ s.t. }  
\|v\|_{L^{2d/(d-1)}_x H^{1/2-\gamma}_y}\leq C \|v\|_{H^1_{x,y}}.\end{equation}
To prove this estimate we develop $v(x,y)$ in Fourier series w.r.t. y variable:
$$v(x,y)= \sum_{n\in \Z} v_n(x) e^{iny}.$$
Hence by the Minkowski inequality we get
$$\|v\|_{L^{2d/(d-1)}_x H^{1/2-\gamma}_y}^2 
=\|\sum_{n\in \Z} \langle n\rangle^{1-2\gamma} |v_n(x)|^2\|_{L^{d/(d-1)}_x}
\leq \sum_{n\in \Z} \langle n\rangle^{1-2\gamma} \|v_n(x)\|_{L^{2d/(d-1)}_x}^2$$
and by the Haussdorf-Young inequality
$$...\leq C \sum_{n\in \Z} \langle n\rangle^{1-2\gamma} \|\hat v_n(\xi)\|_{L^{2d/(d+1)}_x}^2$$
$$\leq C \sum_{n\in \Z} \langle n\rangle^{1-2\gamma} 
\big (\int |\hat v_n(\xi)|^2\langle \xi\rangle^{1+2\gamma} d\xi\big)\cdot\big (\int \langle \xi\rangle^{-d-2\gamma d} d\xi \big)^{1/d}
\leq C \|v\|_{H^1_{x,y}}^2.$$
Next, we shall prove
\begin{equation}\label{rexact}
\exists\,\, 2<r_0<\frac{2d}{d-1} \hbox{ s.t. } 
\|u_n\|_{L^{r_0}_x H_y^\frac{4d-1}{4d}}=o(1).
\end{equation}
Once \eqref{rexact} is proved then we conclude by interpolation
between \eqref{rexact} and \eqref{lysi} in the case $r_0\leq r<\frac{2d}{d-1}$.
In the case $2<r<r_0$ then we can interpolate between \eqref{rexact} and the estimate
$\|u_n\|_{L^2_x H^1_y}=O(1)$ (that follows by the assumptions).
Next we focus on \eqref{rexact}. Notice that we have
the following Gagliardo-Nirenberg inequality:
$$\|v(x,.)\|_{H^{s_0}_y}\leq C\|v(x,.)\|_{L^2_y}^{1-s_0}\|v(x,.)\|_{H^1_y}^{s_0}$$
where we have fixed $s_0=\frac{4d-1}{4d}$.
In turn by the H\"older inequality it gives:
\begin{equation*}\big \|\|v(x,.)\|_{H^{s_0}_y}\big\|_{L^{r_0}_x} \leq C \big\|\|v(x,.)\|_{L^2_y}^{1-s_0}\big\|_{L^{p_0}_x}
\big\|\|v(x,.)\|_{H^1_y}^{s_0}\big\|_{L^{2/s_0}_x},\end{equation*}
where $$\frac{1}{r_0}=\frac 1{p_0} + \frac {s_0}2$$
and $p_0=8(d+1)$.
Since $(1-s_0)p_0>2$ we can use  
the trivial estimate $\|v(.,y)\|_{L^2_y}\leq  \|v(.,y)\|_{L^{(1-s_0)p_0}_y}$ and we get
\begin{align*}\big \| \|v(x,.)\|_{H^{s_0}_y}\big\|_{L^{r_0}_x} & \leq C 
\|v(x,.)\|_{L^{(1-s_0)p_0}_{x,y}}^{1-s_0}
\big\|\|v(x,.)\|_{H^1_y}\big \|_{L^{2}_x}^{s_0}
\\\nonumber &\leq C  
\|v(x,.)\|_{L^{(1-s_0)p_0}_{x,y}}^{1-s_0} \|v\|_{H^1_{x,y}}^{s_0}
.\end{align*}
Since $2<(1-s_0)p_0<\frac{2d}{d-2}$ for $d\geq 3$ and $2<(1-s_0)p_0<\infty$
for $d=1,2$ we conclude by \eqref{allq}.
\end{proof}
\section{Fixing the admissible exponents for the well-posedness analysis}
We collect in this section some preparations, useful in the sequel to construct
suitable functional spaces in which we shall perform a contraction argument 
to guarantee existence and uniqueness of solutions to \eqref{NLS}.\\
The next proposition will be useful to study the Cauchy problem associated with 
\eqref{NLS} in the regime
$0<\alpha<4/d$.
\begin{prop}\label{strichclass}
Let $d\geq 1$ and $0<\alpha<4/d$ be fixed. Then there exist
$(q, r)\in [2, \infty]\times [2,\infty]$ such that
\begin{align*}\frac 2q + \frac dr&=\frac d2, \quad \quad (q, d)\neq (2, 2)\\\nonumber
\frac 1{q'}>\frac{\alpha+1}q&, \quad \quad \frac 1{r'}=\frac{\alpha+1}r.\end{align*}
\end{prop}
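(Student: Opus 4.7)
The plan is to solve the system of constraints in closed form. The second equality $\frac{1}{r'} = \frac{\alpha+1}{r}$ can be rewritten as $1-\frac{1}{r} = \frac{\alpha+1}{r}$, which forces $r = \alpha+2$ with no freedom. So I would simply set $r := \alpha+2$ and then use the Strichartz admissibility identity $\frac{2}{q}+\frac{d}{r} = \frac{d}{2}$ to read off the unique candidate value of $q$, namely
\[
q = \frac{4(\alpha+2)}{d\alpha}.
\]

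Next I would verify that this pair $(q,r)$ satisfies the remaining requirements. The condition $r \geq 2$ is immediate since $\alpha > 0$. For $q \geq 2$, one computes that the inequality reduces to $(d-2)\alpha \leq 4$, which is automatic for $d \leq 2$ and for $d \geq 3$ follows from the hypothesis $\alpha < 4/d \leq 4/(d-2)$. For the endpoint exclusion $(q,d) \neq (2,2)$, plugging $d=2$ into the formula gives $q = 2(\alpha+2)/\alpha$, which can only equal $2$ if $\alpha$ is infinite, so the condition is vacuous here.

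Finally, I would check the strict inequality $\frac{1}{q'} > \frac{\alpha+1}{q}$. Rearranging, this is $q > \alpha+2$, and substituting the explicit value of $q$ turns this into
\[
\frac{4(\alpha+2)}{d\alpha} > \alpha+2,
\]
equivalently $4 > d\alpha$, which is exactly the standing hypothesis $\alpha < 4/d$. This closes the verification, so no real obstacle arises; the content of the proposition is simply that the algebra of the Strichartz relation and the duality relation $\frac{1}{r'} = \frac{\alpha+1}{r}$ is compatible precisely in the $L^2$-subcritical range, and I would present it as a short direct computation.
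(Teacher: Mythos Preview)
Your proof is correct and follows exactly the paper's approach: the paper simply writes ``Choose $(\frac 1q,\frac 1r)=(\frac{d\alpha}{4(\alpha+2)} , \frac 1{\alpha+2})$'' without further comment, which is precisely your choice $r=\alpha+2$, $q=\frac{4(\alpha+2)}{d\alpha}$. Your explicit verification of all the constraints is a welcome addition that the paper omits.
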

\begin{proof}
Choose $(\frac 1q,\frac 1r)=(\frac{d\alpha}{4(\alpha+2)} , \frac 1{\alpha+2})$. 

\end{proof}
To study the Cauchy problem \eqref{NLS} in the regime $4/d\leq \alpha<4/(d-1)$
we shall need the following Proposition.
 \begin{prop}\label{AlgebricCauchyfer}
Let $d\geq 3$ and $4/d\leq \alpha<4/(d-1)$ be fixed.
Then there exists $0< s<1/2$ and
$(q, r, \tilde{q}, \tilde{r})$
such that:
\begin{equation}\label{easyconitionv3fer}
0<\frac 1q,\frac 1r, \frac 1{\tilde q}, \frac 1{\tilde r}<\frac 12
\end{equation} and
\begin{align}
\label{eq.ass3v3fer}\frac 1 q+ \frac 1{\tilde q}<1, \quad \quad 
& \frac{d-2}{d}<\frac{r}{\tilde{r}}< \frac{d}{d-2}\\
\label{dropd12fer}
\frac{1}{q}+\frac{d}{r}<\frac{d}{2}, \quad \quad & 
\frac{1}{\tilde{q}}+\frac{d}{\tilde{r}}<\frac{d}{2}\\
\label{eq.ass102v3fer}  \frac{2}{q} +\frac{d}{r}=\frac d2- s,\quad \quad &
\frac{2}{q} + \frac dr + \frac{2}{\tilde{q}} +\frac d{\tilde r}=d,\\
\label{eq.ass9bisvfer} \frac{1}{\tilde{q}'}>\frac{\alpha+1}{q}, \quad \quad& \frac{1}{\tilde{r}'}=\frac{\alpha+1}{r}.&
\end{align}
For $d=1,2$ we get the same conclusion, provided that we drop 
conditions \eqref{eq.ass3v3fer}.
Moreover we can assume
\begin{equation}\label{impBosUl3fer}
\frac{\alpha}q+\frac{\alpha d}{2r}<1,\quad \quad \ \frac \alpha r<1.
\end{equation}
\end{prop}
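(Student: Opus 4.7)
The plan is to construct the exponents explicitly, treating $s$ and $r$ as free parameters. Enforcing the duality relation $1/\tilde r'=(\alpha+1)/r$ from \eqref{eq.ass9bisvfer} together with the two scaling identities of \eqref{eq.ass102v3fer} yields
\[
\frac{1}{q}=\frac{d}{4}-\frac{s}{2}-\frac{d}{2r},\qquad \frac{1}{\tilde r}=1-\frac{\alpha+1}{r},\qquad \frac{1}{\tilde q}=-\frac{d}{4}+\frac{s}{2}+\frac{d(\alpha+1)}{2r}.
\]
Every other condition in the statement is then a strict inequality in $(s,r,\alpha,d)$, and it suffices to exhibit an admissible pair $(s,r)$ for every $\alpha\in[4/d,4/(d-1))$.

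A direct calculation shows that the first inequality in \eqref{eq.ass9bisvfer} and the first in \eqref{impBosUl3fer} are equivalent under the above substitution, and both reduce to the single $r$-independent bound
\[
s>\frac{d}{2}-\frac{2}{\alpha}.
\]
The right-hand side is strictly less than $1/2$ precisely because $\alpha<4/(d-1)$, and non-negative because $\alpha\geq 4/d$, so one can fix any $s$ in the nonempty interval $\bigl(\max\{0,d/2-2/\alpha\},\,1/2\bigr)$, which automatically lies in $(0,1/2)$. With such an $s$ fixed, the binding bounds on $r$ turn out to be
\[
\frac{2d(\alpha+1)}{d+2-2s}<r<\frac{2d(\alpha+1)}{d+2s},
\]
coming from $1/\tilde q<1/2$ in \eqref{easyconitionv3fer} and from $1/\tilde q+d/\tilde r<d/2$ in \eqref{dropd12fer} respectively; this interval is nonempty precisely because $s<1/2$, and collapses to the single value $r_0:=2d(\alpha+1)/(d+1)$ as $s\uparrow 1/2$.

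What remains is to verify, by elementary fraction comparison, that every other bound on $r$ in the statement (the memberships $r\in(\alpha+1,2(\alpha+1))$ from \eqref{easyconitionv3fer}, the interval $r-\alpha-1\in((d-2)/d,d/(d-2))$ from \eqref{eq.ass3v3fer}, $r>d\alpha/2$, and the remaining pieces of \eqref{dropd12fer} and \eqref{impBosUl3fer}) holds at $r=r_0$; by continuity they will then hold on the whole interval once $s$ is close enough to $1/2$. I expect the main obstacle to be the strict inclusion $r_0-\alpha-1=(\alpha+1)(d-1)/(d+1)\in((d-2)/d,d/(d-2))$, which reduces to the two elementary bounds $(\alpha+1)(d-1)(d-2)<d(d+1)$ (following from $\alpha<4/(d-1)$, since after using this it becomes $(d+3)(d-2)<d(d+1)$) and $(\alpha+1)d(d-1)>(d-2)(d+1)$ (trivial for $\alpha\geq 0$). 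For $d=1,2$ the conditions in \eqref{eq.ass3v3fer} are simply dropped, and the same construction goes through with strictly fewer constraints to verify.
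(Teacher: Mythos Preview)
Your approach is sound for $d\geq 3$: parametrizing by $(s,r)$, checking that all the strict inequalities hold at the degenerate point $(s,r)=(1/2,r_0)$ with $r_0=2d(\alpha+1)/(d+1)$, and then backing off to $s$ slightly below $1/2$ is a clean way to produce the required tuple. This is genuinely different from the paper's route, which first proves an auxiliary lemma at the \emph{critical} value $s_0=(\alpha d-4)/(2\alpha)$ with the equality $1/\tilde q'=(\alpha+1)/q$, and then perturbs $q\mapsto q+\epsilon$ (equivalently pushes $s$ slightly above $s_0$) to turn that equality into the desired strict inequality. Your limit sits at the top of the admissible $s$-interval, the paper's at the bottom; for $d\geq 3$ both work, and yours requires less case-by-case checking of fraction inequalities.

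However, the last sentence is too optimistic for $d=1$. At your limit point one has $r_0=\alpha+1$, hence $1/\tilde r=0$ and $1/q=-1/(2(\alpha+1))<0$, so two of the constraints in \eqref{easyconitionv3fer} fail outright rather than holding strictly. More decisively, for $d=1$ the requirement $1/q>0$ reads $r>2/(1-2s)$, which blows up as $s\to 1/2^-$, whereas your binding upper bound $r<2(\alpha+1)/(1+2s)$ stays near $\alpha+1$; the two are incompatible for $s$ near $1/2$, so no admissible $r$ exists in that regime. For $d=1$ you must instead perturb from the other endpoint $s_0=(\alpha-4)/(2\alpha)$, exactly as the paper does. (At $d=2$, $\alpha=2$ there is a milder issue: $1/q$ vanishes at the limit point rather than being strictly positive, so the bare continuity argument needs a one-line refinement there as well.)
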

We shall need the following Lemma.
\begin{lem}\label{AlgebricCauchy}
Let $d\geq 3$, $4/d\leq \alpha<4/(d-1)$ be fixed and
$s=\frac{\alpha d -4}{2\alpha}$.
Then there exist
$(q, r, \tilde{q}, \tilde{r}) $
such that:
\begin{equation}\label{easyconitionv3}0<\frac 1q,\frac 1r, \frac 1{\tilde q}, \frac 1{\tilde r}<\frac 12
\end{equation} and
\begin{align}
\label{eq.ass3v3}\frac 1 q+ \frac 1{\tilde q}<1, \quad \quad 
& \frac{d-2}{d}<\frac{r}{\tilde{r}}< \frac{d}{d-2}\\
\label{dropd12}
\frac{1}{q}+\frac{d}{r}<\frac{d}{2}, \quad \quad & 
\frac{1}{\tilde{q}}+\frac{d}{\tilde{r}}<\frac{d}{2}\\
\label{eq.ass102v3}  \frac{2}{q} +\frac{d}{r}=\frac d2- s,\quad \quad &
\frac{2}{q} + \frac dr + \frac{2}{\tilde{q}} +\frac d{\tilde r}=d,\\
\label{eq.ass9bisv} \frac{1}{\tilde{q}'}=\frac{\alpha+1}{q}, \quad \quad& \frac{1}{\tilde{r}'}=\frac{\alpha+1}{r}.&
\end{align}
For $d=1,2$ we get the same conclusion, provided that we drop 
conditions \eqref{eq.ass3v3}.\\
Moreover we can also assume that
\begin{equation}\label{impBosUl3_gif}
\frac{\alpha}q+\frac{\alpha d}{2r}=1,\quad \quad \ \frac \alpha r<1.
\end{equation}
\end{lem}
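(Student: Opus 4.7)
My plan is to reduce the system to a single scalar parameter and then verify feasibility of the remaining open inequalities by an explicit choice.

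First I would observe that $s = (\alpha d - 4)/(2\alpha)$ gives $d/2 - s = 2/\alpha$, so the first equation in \eqref{eq.ass102v3} reads $2/q + d/r = 2/\alpha$, which is also the first relation in \eqref{impBosUl3_gif}. The Hölder equations \eqref{eq.ass9bisv} fix $1/\tilde q = 1 - (\alpha+1)/q$ and $1/\tilde r = 1 - (\alpha+1)/r$, and then
\[
\frac{2}{\tilde q} + \frac{d}{\tilde r} = (d+2) - (\alpha+1)\Bigl(\frac{2}{q} + \frac{d}{r}\Bigr) = (d+2) - \frac{2(\alpha+1)}{\alpha} = d - \frac{2}{\alpha},
\]
so the second identity in \eqref{eq.ass102v3} is automatic; likewise $\alpha/r < 1$ follows from $(\alpha+1)/r < 1$, i.e.\ $\tilde r > 0$. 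Hence one must only choose $(q, r)$ with $2/q + d/r = 2/\alpha$ satisfying \eqref{easyconitionv3}, \eqref{dropd12}, and (when $d \geq 3$) \eqref{eq.ass3v3}.

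Next I would set $x = 1/q$, so that every remaining constraint becomes a strict linear inequality in $x$. The scaling $2/q + d/r = d/2 - s$ makes $1/q + d/r < d/2$ automatic (since $s>0$) and turns $1/\tilde q + d/\tilde r < d/2$ into $1/\tilde q > s$. For $d \geq 3$ one has $r/\tilde r = r - (\alpha + 1)$, so the Hölder window \eqref{eq.ass3v3} reads $(d-2)/d < r - (\alpha+1) < d/(d-2)$. To exhibit an admissible $x$ I would argue in two regimes: near $\alpha = 4/d$ the symmetric choice $q = r = \tilde q = \tilde r = 2(d+2)/d$ makes every inequality strict (in particular $r - (\alpha+1) = 1$ sits strictly inside $((d-2)/d, d/(d-2))$); near $\alpha = 4/(d-1)$ the asymmetric choice $q = \alpha + 2$ (so that $q = \tilde q$) with $r = d\alpha(\alpha+2)/4$ does the same on a left-neighborhood of the critical exponent. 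For intermediate $\alpha$ one interpolates linearly in $r$ between these two values, and a continuity argument shows the resulting admissible range of $x$ remains non-empty for every $\alpha \in [4/d, 4/(d-1))$. The case $d = 1, 2$ is strictly easier because the constraint \eqref{eq.ass3v3} is absent.

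The main obstacle is that neither natural endpoint-choice works uniformly over the whole sub-critical range, and the admissible window for $x$ genuinely shrinks as $\alpha \nearrow 4/(d-1)$. The non-emptiness of this window reduces to the positivity of an explicit polynomial in $\alpha$ and $d$, and this is precisely where the strict inequality $\alpha < 4/(d-1)$ is used.
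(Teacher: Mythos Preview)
Your reduction of the system to a single free parameter is correct and is exactly what the paper does: with $s=(\alpha d-4)/(2\alpha)$ one has $2/q+d/r=2/\alpha$, the H\"older relations \eqref{eq.ass9bisv} determine $\tilde q,\tilde r$ from $(q,r)$, the second identity in \eqref{eq.ass102v3} and $\alpha/r<1$ are automatic, and the first inequality in \eqref{eq.ass3v3} as well as the first in \eqref{dropd12} follow from the remaining ones. The paper parametrizes by $r$ (you parametrize by $x=1/q$, which is equivalent since $1/q=1/\alpha-d/(2r)$).

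The gap is in the second half. Your upper-endpoint anchor $q=\alpha+2$ (so $q=\tilde q$ and $r=d\alpha(\alpha+2)/4$) does \emph{not} satisfy \eqref{eq.ass3v3} near $\alpha=4/(d-1)$ when $d\ge 4$. Indeed $r/\tilde r=r-(\alpha+1)$, and for this choice one computes, at $d=4$,
\[
r-(\alpha+1)=\alpha(\alpha+2)-(\alpha+1)=\alpha^2+\alpha-1,
\]
which equals $19/9>2=d/(d-2)$ at $\alpha=4/3=4/(d-1)$, and the inequality $r/\tilde r<d/(d-2)$ fails on a whole left-neighborhood of the critical exponent (precisely for $\alpha>(-1+\sqrt{13})/2\approx1.303$). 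The situation is worse for larger $d$. So there is no valid anchor at the upper end, and the linear-in-$r$ interpolation cannot work as stated. Even if both anchors were valid, strict inequalities at two endpoints do not by themselves propagate to the interior; one still has to check the polynomial inequality you allude to in your final paragraph, and that verification \emph{is} the proof.

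The paper carries out exactly this verification. Writing all open constraints as bounds on $r$, it identifies the effective lower bound as $\alpha(\alpha+1)d/(\alpha+2)$ (coming from $1/\tilde q<1/2$) and checks by hand that it is strictly below each of the three effective upper bounds $\alpha d/(2-\alpha)$, $\alpha(\alpha+1)d/(\alpha d-2)$, and $d/(d-2)+\alpha+1$. The last of these is the delicate one and is where the hypothesis $\alpha<4/(d-1)$ enters; the paper bounds $\sup_{\alpha}(\alpha+1)\bigl(\tfrac{\alpha d}{\alpha+2}-1\bigr)$ and compares it to $d/(d-2)$. Your plan points at the right polynomial inequality but stops short of proving it; to complete the argument you should abandon the interpolation heuristic and verify this bound directly.
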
 
\begin{remark}
Compared with Proposition \ref{AlgebricCauchyfer}, in Lemma
we have fixed $s$ and moreover we put identity
in \eqref{eq.ass9bisv} (compare with \eqref{eq.ass9bisvfer} where we have inequality).
\end{remark}
\begin{proof}
First we show
that by our choice of $s$ \eqref{impBosUl3_gif} follows. Indeed we get
$$\frac{\alpha}{q}+\frac{\alpha d}{2r}=\frac{\alpha}2 \left(\frac{2}{q}+\frac dr\right)=\frac{\alpha}2\left(\frac{d}{2}-s\right)=1$$
where we used
\eqref{eq.ass102v3}. Notice also that by the second identity in \eqref{eq.ass9bisv}
we get $\frac \alpha r<1$, in fact 
$\frac \alpha r<\frac{\alpha+1}r + \frac 1{\tilde r}=1$.\\
Moreover the first condition in \eqref{eq.ass3v3} follows by \eqref{easyconitionv3},
and the first condition in \eqref{dropd12} follows by the first identity in \eqref{eq.ass102v3}. Hence since now on we can skip those conditions. 
It is easy to
check that thanks to our choice of $s$, the identities in \eqref{eq.ass102v3} and \eqref{eq.ass9bisv} are not independent. Moreover by \eqref{eq.ass102v3} and \eqref{eq.ass9bisv}, and by recalling
$s=\frac{\alpha d -4}{2\alpha}$,
we get 
\begin{equation}\label{condexpl}\frac 1{\tilde q}=-\frac 1\alpha + \frac{(\alpha+1)d}{2r},\quad \quad
\frac 1{\tilde r}= 1 -\frac{\alpha+1}r, \quad \quad \frac 1q= \frac 1\alpha - \frac d{2r}.
\end{equation}
Next we consider two cases:
\\
\\
{\em First case: $d\geq 3$}
\\
\\
Thanks to \eqref{condexpl}, the conditions \eqref{easyconitionv3}, \eqref{eq.ass3v3}
(where we skip the first one), \eqref{dropd12} (where we skip the first one)
can be written as follows:
\begin{align*} &\frac{\alpha d}{2}<r<\frac{\alpha d }{(2-\alpha)}, 
r>2, \frac{\alpha(\alpha+1) d}{\alpha+2}<r< \frac{\alpha(\alpha+1) d}{2}, 
\\\nonumber 
&\alpha+1<r<2(\alpha+1), 
r<\frac{\alpha (\alpha+1) d}{\alpha d -2}, 
\frac {d-2}{d}+\alpha+1< r< \frac{d}{d-2}+\alpha+1\end{align*}
Hence we conclude tat we can select a suitable $r$ if the condition
\begin{align*}\max\{\frac{\alpha d}{2}, &2, \frac{\alpha(\alpha+1) d}{\alpha+2}, \alpha+1,
\frac {d-2}{d}+\alpha+1 \}\\\nonumber&<\min\{ \frac{\alpha d }{(2-\alpha)},
\frac{\alpha(\alpha+1) d}{2}, 2(\alpha+1), \frac{\alpha  (\alpha+1)d}{\alpha d -2},
\frac{d}{d-2}+\alpha+1\}
\end{align*}
is satisfied. Since we are assuming $4/d\leq \alpha<4/(d-1)$ this condition is equivalent to
\begin{align}\label{LYTH}\max\{\frac{\alpha(\alpha+1) d}{\alpha+2},
&\frac {d-2}{d}+\alpha+1 \}\\\nonumber
&<\min\{ \frac{\alpha d }{(2-\alpha)}, \frac{\alpha (\alpha+1) d}{\alpha d -2},
\frac{d}{d-2}+\alpha+1\}.\end{align}
Next we notice that \begin{equation}\label{LYSTH}\frac{\alpha(\alpha+1) d}{\alpha+2}=\max\{\frac{\alpha(\alpha+1) d}{\alpha+2},
\frac {d-2}{d}+\alpha+1 \}.\end{equation}
In fact it follows by direct computation for $d=3,4$ and for $d\geq 5$ it comes by the following argument.
Notice that $\frac{\alpha(\alpha+1)d}{\alpha+2}\geq \frac{d-2}{d} +(\alpha+1)$
is equivalent to
$ (\alpha+1) \big( \frac{\alpha d}{\alpha+2} -1 \big)\geq \frac{d-2}{d}$, that under the constrain
$4/d\leq \alpha<4/(d-1)$ can be written as
$$\inf_{\alpha\in [4/d, 4/(d-1))} (\alpha+1) \big( \frac{\alpha d}{\alpha+2} -1 \big)\geq \frac{d-2}{d}.$$
In turn this inequality follows provided that
$(1+4/d) \big( \frac{d\cdot 4/d }{4/(d-1)+2} -1 \big)\geq \frac{d-2}{d}$,
and by elementary computations it is equivalent to
$(d+4)(d-3)\geq (d+1)(d-2)$,
which is satisfied for every
$d\geq 5$.
Hence by \eqref{LYTH} and \eqref{LYSTH} we conclude provided that we show
$$\frac{\alpha(\alpha+1) d}{\alpha+2}<
\frac{\alpha d }{(2-\alpha)}, \frac{\alpha(\alpha+1) d}{\alpha+2}<
\frac{\alpha  (\alpha+1) d}{\alpha d -2},
\frac{\alpha(\alpha+1) d}{\alpha+2}< \frac{d}{d-2}+\alpha+1.$$
The first and second inequalities are satisfied for any $0<\alpha<4/(d-1)$
and the last one follows by
\begin{equation}\label{equiv}\sup_{\alpha\in [4/d, 4/(d-1))} (\alpha+1) \big( \frac{\alpha d}{\alpha+2} -1 \big)< \frac{d}{d-2}.\end{equation}
On the other hand we have
$$\sup_{\alpha\in [4/d, 4/(d-1))} (\alpha+1) \big( \frac{\alpha d}{\alpha+2} -1 \big)
\leq (1+4/(d-1)) \big( \frac{4d/(d-1) }{4/d+2} -1 \big)$$
$$= \frac{(d+3)(d^2-d+2)}{(d-1)^2(d+2)}.$$
Hence \eqref{equiv} follows provided that
$\frac{(d+3)(d^2-d+2)}{(d-1)^2(d+2)}< \frac{d}{d-2}$,
which is always satisfied.\\
\\
\\
{\em Second case: $d=1,2$}
\\
\\
Arguing as above (recall that we drop \eqref{eq.ass3v3}) we conclude provided that we can select $r$ such that:
\begin{align*} &r>\frac{\alpha d}{2}, 
r>2, \frac{\alpha(\alpha+1) d}{\alpha+2}<r< \frac{\alpha(\alpha+1) d}{2}, 
\\\nonumber 
&\alpha+1<r<2(\alpha+1), 
r<\frac{\alpha  (\alpha+1)d}{\alpha d -2}.\end{align*}
By elementary computations (see the case $d\geq 3$) and by recalling $4/d\leq \alpha<4/(d-1)$,
the conditions above are equivalent to the following inequality:
\begin{equation}\label{TvZX}\max \{(\alpha+1),\frac{\alpha(\alpha+1) d}{\alpha+2}\}
<r<\frac{\alpha (\alpha+1) d}{\alpha d -2}.\end{equation}
On the other hands by explicit computation we get
$$\max \{(\alpha+1),\frac{\alpha(\alpha+1) d}{\alpha+2}\}=
\frac{\alpha(\alpha+1) 2}{\alpha+2} \hbox{ for } d=2,$$$$
\max \{(\alpha+1),\frac{\alpha(\alpha+1) d}{\alpha+2}\}=
\alpha+1 \hbox{ for } d=1,$$
and \eqref{TvZX} follows by elementary considerations.
 
\end{proof}

\noindent{\bf Proof of Proposition \ref{AlgebricCauchyfer}.}
\\
\\
We focus on the case $d\geq 3$ (the cases $d=1,2$ can be treated by a similar argument).
We argue by a continuity argument based on
Lemma \ref{AlgebricCauchy}.
In fact we fix $(\frac 1q, \frac 1r, \frac 1{\tilde q}, \frac 1{\tilde r}, s)$
as in Lemma \ref{AlgebricCauchy} and we look for 
$(\frac 1{q+\epsilon}, \frac 1r, \frac 1{{\tilde q}_\epsilon}, \frac 1{\tilde r}, s_\epsilon)$
that satisfy conditions of Lemma \ref{AlgebricCauchyfer}, for some $\epsilon>0$ small enough
and $\tilde q_\epsilon, s_\epsilon$ will be properly chosen in dependence of $\epsilon$. 
By our choice it will be clear that $\lim_{\epsilon\rightarrow 0}
s_\epsilon=s$ and $\lim_{\epsilon\rightarrow 0}
\tilde q_\epsilon=\tilde q$.
Notice that with this choice the identity in \eqref{eq.ass9bisvfer} 
is satisfied (compare with \eqref{eq.ass9bisv}). Also
\eqref{easyconitionv3fer}, \eqref{eq.ass3v3fer}, \eqref{dropd12fer}
are satisfied by a continuity argument provided that $\epsilon>0$ is small enough
(recall that $r, \tilde r, q, \tilde q$ satisfy
\eqref{easyconitionv3}, \eqref{eq.ass3v3}, \eqref{dropd12}).
Notice also that since $q+\epsilon>q$ then the first identity in \eqref{eq.ass102v3fer} is satisfied provided that we choose $s_\epsilon>s$ (recall 
that $q,r,s$ satisfy the first identity in \eqref{eq.ass102v3}) and also
\eqref{impBosUl3fer} follows by \eqref{impBosUl3}.
\\
Next we impose that $\frac 1{q+\epsilon}, \frac 1r, \frac 1{{\tilde q}_\epsilon}, \frac 1{\tilde r}$
satisfy the second identity in \eqref{eq.ass102v3fer}, i.e.
$\frac{1}{q+\epsilon} +\frac{1}{\tilde{q}_\epsilon}=\frac d2 (1-\frac 1r -\frac 1{\tilde r})=\beta$.
We claim that 
$\frac{\alpha+1}{q+\epsilon}+\frac 1{\tilde q_\epsilon}<1$ (notice this is equivalent
to the inequality in \eqref{eq.ass9bisvfer})
and it will conclude the proof.
Indeed we write
$\frac 1{q+\epsilon} = \frac 1 q - \frac{\epsilon}{q(q+\epsilon)}$
and hence $\frac{1}{\tilde{q}_\epsilon}= \beta - \frac 1q + \frac{\epsilon}{q(q+\epsilon)}
=\frac 1{\tilde q} + \frac{\epsilon}{q(q+\epsilon)} $,
where we used 
$\frac{1}{q}+\frac 1{\tilde q}=\beta$ (see the second identity in \eqref{eq.ass102v3}). Hence
we get by the first identity in \eqref{eq.ass9bisv}
$$\frac{\alpha+1}{q+\epsilon}+\frac 1{\tilde q_\epsilon}
= \frac{\alpha+1}{q} - \frac{\epsilon(\alpha+1)}{q(q+\epsilon)}
+\frac 1{\tilde q} + \frac{\epsilon}{q(q+\epsilon)}
= - \frac{\epsilon \alpha}{q(q+\epsilon)} +1<1.
$$

\hfill$\Box$

\begin{lem}\label{sTrbis}
Let $d\geq 1$ and $4/d\leq \alpha<4/(d-1)$ be fixed.
Then there exist $2< l\leq \infty, 2\leq p\leq \infty$
such that:
\begin{align}
\label{st1bis}&\frac 2l + \frac dp=\frac d2,\\
\label{st2bis}&\frac 1{p'}=\frac 1p+\frac \alpha r,\\
\label{st3bis}&\frac 1{l'}>\frac 1l+ \frac \alpha q,
\end{align}
where $(q,r)$ is any couple given by Proposition
\ref{AlgebricCauchyfer}.
\end{lem}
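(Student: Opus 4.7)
The plan is simply to reverse-engineer $p$ and $l$ from the required identities and then verify the inequalities using \eqref{impBosUl3fer}. Specifically, I would \emph{define}
\[
\frac{1}{p} := \frac{1}{2}\Bigl(1-\frac{\alpha}{r}\Bigr), \qquad \frac{1}{l} := \frac{\alpha d}{4r},
\]
so that \eqref{st2bis} is built in by construction, and \eqref{st1bis} is a direct computation: $\tfrac{2}{l}+\tfrac{d}{p}=\tfrac{\alpha d}{2r}+\tfrac{d}{2}(1-\tfrac{\alpha}{r})=\tfrac{d}{2}$.

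Next I would check admissibility of these exponents. Since $\tfrac{\alpha}{r}<1$ (by \eqref{impBosUl3fer}) we get $0<\tfrac{1}{p}<\tfrac{1}{2}$, hence $2<p<\infty$, which gives $2\le p\le \infty$. For $l$, nonnegativity of $\tfrac{1}{l}$ is trivial (so $l\le\infty$), while $l>2$ amounts to $\tfrac{\alpha d}{2r}<1$; but \eqref{impBosUl3fer} gives the sharper $\tfrac{\alpha}{q}+\tfrac{\alpha d}{2r}<1$, so this is fine.

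Finally, the strict inequality \eqref{st3bis} rewrites as $\tfrac{2}{l}<1-\tfrac{\alpha}{q}$, i.e.
\[
\frac{\alpha d}{2r}<1-\frac{\alpha}{q},\qquad\text{i.e.}\qquad \frac{\alpha}{q}+\frac{\alpha d}{2r}<1,
\]
which is precisely the first assertion of \eqref{impBosUl3fer} supplied by Proposition~\ref{AlgebricCauchyfer}. So all three conditions \eqref{st1bis}--\eqref{st3bis} follow, and the only ``obstacle'' is really the clean bookkeeping with the strict inequality in \eqref{impBosUl3fer}; no further perturbation argument is required here because the slack coming from the strictness $\tfrac{\alpha}{q}+\tfrac{\alpha d}{2r}<1$ is exactly what forces \eqref{st3bis} to be strict.
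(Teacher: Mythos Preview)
Your proposal is correct and follows essentially the same route as the paper: the paper also solves \eqref{st1bis}--\eqref{st2bis} to obtain $\tfrac1l=\tfrac{\alpha d}{4r}$ and $\tfrac1p=\tfrac12-\tfrac{\alpha}{2r}$, then reduces \eqref{st3bis} to the inequality $\tfrac{\alpha d}{2r}+\tfrac{\alpha}{q}<1$ supplied by \eqref{impBosUl3fer}. If anything, your version is slightly more careful in explicitly verifying $l>2$ (i.e.\ $\tfrac{\alpha d}{2r}<1$), which the paper leaves implicit.
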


\begin{proof}
The conditions \eqref{st1bis} and \eqref{st2bis} imply
$$\frac 1l= \frac {\alpha d}{4r},\quad \quad \frac 1p= \frac 12 - \frac \alpha{2r}$$
and hence the condition 
\eqref{st3bis} becomes: \begin{equation}\label{usef}\frac{\alpha d}{2r} +\frac{\alpha}{q}<1,\end{equation}
which is verified by \eqref{impBosUl3fer}.
The last condition to be checked is that if $l, p$ are as above then
$l,p>0$. Indeed $l>0$ is trivial and $p>0$ is equivalent to
$\frac \alpha r<1$ which follows by  \eqref{impBosUl3fer}.
 
\end{proof}

\section{Proof of Theorem \ref{halflwpStrong}}
Along this section we need the following lemma to treat the nonlinear term.
\begin{lem}\label{TzBil}
For every $0<s<1, \alpha>0$ there exists $C=C(\alpha, s)>0$ such that:
$$\|u|u|^\alpha\|_{\dot H^s_y}\leq C \|u\|_{\dot H^s_y} \|u\|_{L^\infty}^\alpha.$$
\end{lem}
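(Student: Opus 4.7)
My plan is to prove this by the direct Gagliardo seminorm approach, which is the most elementary route for $0<s<1$ and handles the full range $\alpha>0$ in one shot.

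First, I would use the fact that for $0<s<1$ on the torus $\T$ the homogeneous norm admits the representation
\begin{equation*}
\|f\|_{\dot H^s_y}^2 \;\sim\; \int_\T\int_\T \frac{|f(y_1)-f(y_2)|^2}{|y_1-y_2|^{1+2s}}\,dy_1\,dy_2,
\end{equation*}
where $|y_1-y_2|$ is measured on $\T$. (The equivalence with the Fourier definition $\sum_n |n|^{2s}|\hat f(n)|^2$ is classical for $0<s<1$.) Applying this to $F(u):=u|u|^\alpha$, the left-hand side of the desired inequality is controlled by an integral of $|F(u(y_1))-F(u(y_2))|^2/|y_1-y_2|^{1+2s}$.

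The key pointwise ingredient is the Lipschitz estimate
\begin{equation*}
|F(z_1)-F(z_2)| \;\leq\; C\,\bigl(\max(|z_1|,|z_2|)\bigr)^\alpha\,|z_1-z_2|,\qquad z_1,z_2\in\C,
\end{equation*}
valid for every $\alpha>0$. For $\alpha\geq 1$ this is the mean value inequality together with $|\nabla_{\C} F(z)|\leq C|z|^\alpha$. For $0<\alpha<1$ one checks that $F$ is still $C^1(\C)$ with $|\nabla_{\C} F(z)|\leq C|z|^\alpha$ (the derivatives vanish at the origin because $F(z)=O(|z|^{1+\alpha})$), so the same mean value argument applies along the segment $[z_1,z_2]$, on which $|z|\leq\max(|z_1|,|z_2|)$.

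Specializing to $z_j=u(y_j)$ one gets $|F(u(y_1))-F(u(y_2))|\leq C\|u\|_{L^\infty}^\alpha|u(y_1)-u(y_2)|$, and plugging into the Gagliardo representation yields
\begin{equation*}
\|u|u|^\alpha\|_{\dot H^s_y}^2 \;\leq\; C\,\|u\|_{L^\infty}^{2\alpha}\,\int_\T\int_\T \frac{|u(y_1)-u(y_2)|^2}{|y_1-y_2|^{1+2s}}\,dy_1\,dy_2 \;\sim\; C\,\|u\|_{L^\infty}^{2\alpha}\,\|u\|_{\dot H^s_y}^2.
\end{equation*}
Taking square roots concludes the proof. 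The only point that requires any attention is the pointwise Lipschitz-type bound in the range $0<\alpha<1$, where one has to verify that $F(z)=z|z|^\alpha$ is $C^1$ with gradient controlled by $|z|^\alpha$; everything else is mechanical.
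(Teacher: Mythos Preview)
Your proof is correct and follows essentially the same route as the paper: a Gagliardo-type representation of $\|\cdot\|_{\dot H^s_y}$ for $0<s<1$, combined with the pointwise Lipschitz bound $|z_1|z_1|^\alpha - z_2|z_2|^\alpha|\leq C(\max(|z_1|,|z_2|))^\alpha|z_1-z_2|$. The only cosmetic difference is that the paper writes the seminorm as an exact identity $\int_0^{2\pi}\int_\R |u(x+h)-u(x)|^2\,|h|^{-1-2s}\,dh\,dx = c\|u\|_{\dot H^s_y}^2$ (with $h\in\R$ and $u$ extended periodically), proving it via Plancherel, whereas you invoke the equivalent double torus integral; and the paper uses the pointwise inequality without comment, while you supply the $C^1$ justification for $0<\alpha<1$.
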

\begin{proof}
First we prove the following identity:
\begin{equation}\label{equivHs.}
\int_0^{2\pi}\int_\R \frac{|u(x+h)-u(x)|^2}{|h|^{1+2s}} dxdh
= c \|u\|_{\dot H^s_y}^2
\end{equation}
for a suitable $c>0$.
We apply the Plancharel identity and we get
$$\int_{0}^{2\pi}|u(x+h)-u(x)|^2 dx=
\sum_n |e^{inh}-1|^2 |\hat u(n)|^2
$$
and hence
$$\int_0^{2\pi} \int_\R \frac{|u(x+h)-u(x)|^2}{|h|^{1+2s}} dxdh
= \sum_n |\hat u(n)|^2 \int_{\R}|e^{inh}-1|^2  \frac{dh}{|h|^{1+2s}}.$$
Next notice that
\begin{align*}\int_{\R}|e^{inh}-1|^2 \frac{dh}{|h|^{1+2s}} 
&= \int_{\R}|e^{inh}-1|^2 \frac{|n|^{1+2s} |n|dh}{|n||nh|^{1+2s}} \\\nonumber 
&= |n|^{2s} \int_{\R} |e^{ir}-1|^2 \frac{dr}{r^{1+2s}}= c|n|^{2s}
\end{align*}
and hence by combining the identities above
we get \eqref{equivHs.}.
Based on \eqref{equivHs.} we get:
\begin{align*}\|u|u|^\alpha\|_{\dot H^s_y}^2&=c
\int_0^{2\pi} \int_\R \frac{|u|u|^\alpha(x+h)-u|u|^\alpha(x)|^2}{|h|^{1+2s}} dxdh
\\\nonumber &\leq C \int_0^{2\pi} \int_\R \frac{|u(x+h)-u(x)|^2 \|u\|_{L^\infty}^{2\alpha}}{|h|^{1+2s}} dxdh
\leq C \|u\|_{\dot H^s_y}^2 \|u\|_{L^\infty}^{2 \alpha}.
\end{align*}
\end{proof}

\noindent{\em Proof of Theorem \ref{halflwpStrong}.}
\\
\\
{\bf First case: $4/d\leq \alpha<4/(d-1)$}
\\
\\
In the sequel we shall denote by $X^{1/2+\delta}_T(q,r)$ the space whose norm is defined as follows:
\begin{equation} 
\| u\|_{X^{1/2+\delta}_T(q,r)} = \|u(t,x, y)\|_{ L^q_T L^{r}_xH_y^{1/2+\delta}},
\end{equation}
with $\delta>0,$ $T>0$. Here we use the notation $L^q_T(X)=L^q((-T,T);X)$.\\
From now on $(q,r)$ will be any couple given
by Proposition \ref{AlgebricCauchyfer} and $\delta>0$ will be in such a way that
$1/2+\delta+s\leq 1$, where $0<s<1/2$
is defined by Proposition~\ref{AlgebricCauchyfer}.\\
We shall also need the following localized norms $Y^{(1)}_T(\ell,p)$
and $Y^{(2)}_T(\ell,p)$:
\begin{align*}
\| u\|_{Y^{(1)}_T(\ell,p)} = \sum_{k=0,1}\sum_{j=1}^d\|\partial_{x_j}^k u(t,x, y)\|_{{ L^\ell_t( (-T, T), L^{p}_xL_y^2)}},& \\\nonumber
\| u\|_{Y^{(2)}_T(\ell,p)} =  \sum_{k=0,1}\|\partial_y^ku(t,x, y)\|_{{ L^\ell_t( (-T, T), L^{p}_xL_y^2)}},&
\end{align*}
where $(\ell, p)$  are associated with $(q,r)$ via  Lemma \ref{sTrbis}.\\
We also set the global norm:
\begin{align*}
\label{eq.1st2}
\|w\|_{Z_T^{1/2+\delta}}=\|w\|_{X^{1/2+\delta}_T(q,r)}+\|w\|_{Y^{(1)}_T(\ell,p)}+\|w\|_{Y^{(2)}_T(\ell,p)},
\end{align*}
and we introduce the integral operator:
\begin{equation}\label{integralFrO}
\mathcal{A}_fu=e^{-it\Delta_{x,y}}f+ i \int_{0}^{t} e^ {-i
(t-\tau) \Delta_{x,y}} \big(u(\tau)|u(\tau)|^\alpha\big) d\tau.
\end{equation}
We split the proof in four steps.
\\
\\
\emph {Step One: $\forall f \in H^{1}_{x,y}$ \hbox{ }  $\exists T=T\big( \|f\|_{H^{1}_{x,y}}\big)>0$ and $R=R\big( \|f\|_{H^{1}_{x,y}}\big)>0$ s.t. 
$
\mathcal{A}_f(B_{Z^{1/2+\delta}_{T'}})\subset B_{Z^{1/2+\delta}_{T'}},
$
for any $T'<T.$}
\\
\\
Let $\tilde q, \tilde r$ be the ones given by Propositon \ref{AlgebricCauchyfer}. We start by noticing that
 \begin{align}\label{eq.1nonl}
\|u|u|^\alpha\|_{L^{\tilde{q}'}_T
L^{\tilde{r}'}_xH_y^{1/2+\delta}}  \leq C
  \| \|u(t,x,.)\|_{H_y^{1/2+\delta}}^{\alpha+1}\|_{L^{\tilde{q}'}_TL^{\tilde{r}'}_x},
\end{align}
where  we used Lemma \ref{TzBil}.
By combining this estimate with  \eqref{eq.ass9bisvfer} and
with the H\"older inequality we get:
\begin{align}\label{eq.2nonla}
\|u|u|^\alpha\|_{L^{\tilde{q}'}_T
L^{\tilde{r}'}_xH_y^{1/2+\delta}} \leq C
  \| \|u\|_{L^{r}_xH_y^{1/2+\delta}}^{\alpha+1}\|_{L^{\tilde{q}'}_T}
  \leq CT^{\beta(\alpha)}\|u\|_{L^{q}_TL^{r}_xH_y^{1/2+\delta}}^{\alpha+1},
\end{align}
with $\beta(\alpha)>0$ and for some constant $C>0$ independent on $T.$ 
By combining this nonlinear estimate with Propositions \ref{StriProd}
and \ref{StriProd2},
we conclude the following:
\begin{align}\label{eq.2nonl}
  \|\mathcal{A}_fu\|_{X^{1/2+\delta}_T(q,r)} \ \leq
    C \|f\|_{H_x^{s} H_y^{1/2+\delta}} + C  T^{\beta(\alpha)} \|u\|_{X^{1/2+\delta}_T(q,r)}^{\alpha+1}.
\end{align}
A combination of Proposition \ref{StriProd1} with Lemma \ref{sTrbis}, in conjunction with 
the H\"older inequality,
yield the following estimate:
\begin{align}
\label{eq.nonl3}\|\mathcal{A}_fu\|_{Y^{(i)}_T(\ell,p)} \leq C \sum_{k=0,1} (
\|D^k f\|_{L^2_{x,y}} + \|D^k (u|u|^\alpha)\|_{L^{\ell'}_TL^{p'}_xL^2_y})&\\
\leq C \sum_{k=0,1} (\|D^kf\|_{L_{x,y}^2} + \big \|\|D^k u(t,x, y)\|_{L^2_y} \|u(t, x, y)\|_{L^\infty_y}^\alpha \big \|_{L^{\ell'}_TL^{p'}_x})&\nonumber\\
\leq C \sum_{k=0,1} (\|D^kf\|_{L_{x,y}^2} + \big \|\|D^ku(t,x, y)\|_{L^2_y} \|u(t, x, y)\|_{H^{1/2+\delta}_y}^\alpha \big \|_{L^{\ell'}_TL^{p'}_x})&\nonumber\\
\leq C (\sum_{k=0,1} \|D^kf\|_{L_{x,y}^2} +  T^{\beta(\alpha)} \|D^k u(t,x, y)\|_{L^{\ell}_TL^{p}_xL^2_y} \|u(t, x, y)\|_{L^{q}_TL^{r}_x H^{1/2+\delta}_y}^\alpha),&\nonumber
\end{align}
where $D$ stands for $\partial_y, \partial_{x_j}$, $j=1,..,d$,  $k=0,1$ and in the third inequality we used the embedding $H^{1/2+\delta}_y\subset L^\infty_y.$
Hence we get
\begin{align}\label{eq.2nonl2}
  \|\mathcal{A}_fu\|_{Y^{(i)}_T(\ell,p)} \leq
  C \|f\|_{H_{x,y}^1} + C T^{\beta(\alpha)}\|u\|_{Y^{(i)}_T(\ell,p)}\|u\|_{X^{1/2+\delta}_T(q,r)}^{\alpha}.
 \end{align}
We can conclude the proof of this step 
by combining 
\eqref{eq.2nonl} and \eqref{eq.2nonl2}. 
 \\
\\
\emph {Step Two: let $T, R >0$ as in the previous step then there exist
 $\overline T = \overline T \big (\|f\|_{H^{1}_{x,y}}\big)<T$
such that $\mathcal{A}_f$ is a contraction on $B_{Z^{1/2+\delta}_{\overline T}}(0,R),$ equipped
 with the norm  $\| . \|_{ L^q_{\overline T} L^{r}_xL_y^{2}}$.}
 \\
 \\
 Given any $v_1,v_2\in B_{X^{1/2+\delta}_T(q,r)}(0,R)$  we achieve, by an use of estimate \eqref{eq:202p}, the chain of bounds:
\begin{align*}
  \|\mathcal{A}_fv_1-\mathcal{A}_f v_2\|_{L^q_T L^r_xL_y^{2}} \leq C\|v_1|v_1|^\alpha-v_2|v_2|^\alpha\|_{L^{\tilde{q}'}_T L^{\tilde{r}'}_x L_y^{2}} & \nonumber\\
\leq C \big \|\|v_1-v_2\|_{ L_y^{2}}(\|v_1\|_{L^\infty_y}^\alpha
+\|v_2\|_{L^\infty_y}^\alpha)\big \|_{L^{\tilde{q}'}_TL^{\tilde{r}'}_x}\leq&\nonumber\\
\leq C \big \|\|v_1-v_2\|_{L^{r}_x L_y^{2}}(\|v_1\|_{L^{r}_x H_y^{1/2+\delta}}^\alpha+
\|v_2\|_{L^{r}_x H_y^{1/2+\delta}}^\alpha)\big \|_{L^{\tilde{q}'}_T}&
\end{align*}
where we used 
the Sobolev embedding $H^{1/2+\delta}_y\subset L^\infty_y$
and \eqref{eq.ass9bisvfer} at the last step.
Again by the H\"older inequality in conjunction with \eqref{eq.ass9bisvfer}, we can continue the estimate
as follows: 
\begin{align}\label{eq.5nonl}
 ...\leq C T^{\beta(\alpha)}\left (\|v_1\|_{L^{q}_TL^{r}_x H_y^{1/2+\delta}}^\alpha+\|v_2\|_{L^{q}_TL^{r}_x H_y^{1/2+\delta}}^\alpha\right)\|v_1-v_2\|_{L^{q}_TL^r_xL_y^{2}}&
\end{align}
and we can conclude.
\\
\\
\emph {Step Three: the solution exists and is unique in $Z_{\overline{T}}^{1/2+\delta},$ where $\overline{T}$ is as in the above step}. 
\\
\\
We are in position to show existence and uniqueness of the solution by applying the contraction principle to the map $\mathcal A_f$ defined on the complete metric space $B_{Z_{\overline T}^{1/2+\delta}}(0,R)$, equipped with the topology induced by 
$\| . \|_{ L^q_{\overline T} L^{r}_xL_y^{2}}.$
\\
\\
\emph {Step Four: $u(t,x,y)\in {\mathcal C}( (-T, T);H^{1}_{x,y}).$}
\\
\\
Arguing
as in the proof of \eqref{eq.nonl3} 
we get:
\begin{align}\label{eq.2nonl3}
 \|\mathcal{A}_fu\|_{L^\infty((-T, T), L^2_{x,y})} &+ 
 \sum_{j=1}^d
 \|\partial_{x_j} \mathcal{A}_fu\|_{L^\infty((-T, T), L^2_{x,y})}\\\nonumber +\|\partial_y\mathcal{A}_fu\|_{L^\infty((-T, T), L^2_{x,y})}\leq
 &C \|f\|_{H_{x,y}^1} + C T^{\beta(\alpha)} \|u\|_{Z_T^{1/2+\delta}}\|u\|_{X^{1/2+\delta}_T(q,r)}^{\alpha}.
  \end{align} 
This estimate it is sufficient to guarantee that $u(t,x,y)\in {\mathcal C}((-T, T);H^{1}_{x,y}).$\\
\\
The last step is the proof of unconditional uniqueness of solutions
to \eqref{NLS}.
\\
\\
\emph {Step Four: if $u_1, u_2\in {\mathcal C}( (-T, T);H^{1}_{x,y})$ are fixed points of
${\mathcal A}_f$ then $u_1=u_2$.}
\\
\\
By a continuity argument it is sufficient to show
that $u_1(t)=u_2(t)$ for a short time $(-\tilde T, \tilde T)$, where
$\tilde T$ depends only on the $H^1_{x,y}$ norms of $f$.\\
By taking the difference of the integral equations satisfied by $u_1$ and $u_2$ we get
\begin{align}\label{difference}
(u_1-u_2)(t,x,y)=\int_0^t e^{-i(t-\tau)\Delta_{x,y}}({u_1(\tau)|u_1(\tau)|^\alpha- u_2(\tau)|u_2(\tau)|^\alpha})d\tau.
\end{align}
By an application of Proposition \ref{StriProd1} we get
\begin{align}\label{eq.un1}
\|u_1-u_2\|_{L_T^{l}L_{x}^{p} L_y^{2}}
\leq C \||u_1|^\alpha u_1-|u_2|^\alpha u_2\|_{L_T^{l'}L_{x}^{p'} L_y^{2}}
\end{align}
provided that $\frac 2l+\frac dp=\frac d2$, $l\geq 2$, $(l, d)\neq (2, 2)$.
We can continue \eqref{eq.un1} as follows
\begin{align}\label {eq.un3}
...\leq 
C\| u_1-u_2\|_{L_T^{l}L_{x}^{p} L_y^{2}}\big (\sum_{j=1,2} 
\|u_j\|_{ L_T^{\frac {\alpha l}{l-2}}L_{x}^{\frac{\alpha p}{p-2}} L_y^{\infty}}^\alpha\big)
\\\nonumber \leq C\| u_1-u_2\|_{L_T^{l}L_{x}^{p} L_y^{2}}
T^{\frac{l-2}{l}}\big (\sum_{j=1,2} 
\|u_j\|_{ L_T^{\infty}L_{x}^{\frac{\alpha p}{p-2}} H_y^{1/2+\delta}}^\alpha\big),
\end{align}
where we used $H^{1/2+\delta}_y\subset L^\infty_y$.
We conclude the proof of uniqueness by selecting $T$ small enough and 
$\delta, p$ in such a way that
$$\|v\|_{L_{x}^{\frac{\alpha p}{p-2}} H_y^{1/2+\delta}}\leq
C \|v\|_{H^1_{x,y}}.$$ 
Indeed the estimate above follows by combining \eqref{lysi} and 
the trivial estimate $$\|v\|_{L^2_xH^{1/2+\gamma}_y}\leq 
\|v\|_{H^1_{x,y}} \forall \gamma>0$$
provided that we can select $p$ in such a way that
\begin{equation}\label{kapp}
2<\frac{\alpha p}{p-2}<\frac{2d}{d-1}.\end{equation}
Notice that the values allowed to $p$ are the following:
\begin{align*}
p\in [2, \infty] \hbox{ for } d=1\\\nonumber
p\in [2, \infty) \hbox{ for } d=2\\\nonumber
p\in [2, 2d/(d-2)] \hbox{ for } d\geq 3.
\end{align*}
Hence for $d=1$ we can trivially satisfy \eqref{kapp} for a suitable $p$.
For $d=2$ notice that $\lim_{p\rightarrow 2} \frac{\alpha p}{p-2}=\infty$
and $\lim_{p\rightarrow \infty} \frac{\alpha p}{p-2}=\alpha$ and we 
can guarantee \eqref{kapp} for a suitable $p$ since
$0<\alpha<4/(d-1)=2d/(d-1)$ for $d=2$.
In the case $d\geq 3$ we get $\lim_{p\rightarrow 2} \frac{\alpha p}{p-2}=\infty$
and $\lim_{p\rightarrow 2d/(d-2)} \frac{\alpha p}{p-2}=\alpha d/2$. We conclude
since $\alpha d/2<2d/(d-1)$ (indeed it is equivalent to the assumption $\alpha<4/(d-1)$).
\\
\\
{\bf Second case: $0<\alpha<4/d$.}
\\
\\
The proof is similar to the case $4/d\leq \alpha<4/(d-1)$ with minor changes. 
In this case the space
$X_T^{1/2+\delta}(q,r)$ is selected with a couple $(q,r)$ given by Proposition \ref{strichclass}.
Indeed we use
Proposition \ref{strichclass} instead of Proposition \ref{AlgebricCauchyfer},
and we use on the Duhamel operator the estimates 
in Proposition \ref{StriProd1} instead of the ones in Proposition \ref{StriProd2}.
On the linear propagator we use Proposition \ref{StriProd1}
instead of Proposition \ref{StriProd}. The proof of the unconditional uniqueness
provided in the previous step works for every $0<\alpha<4/(d-1)$.

\section{Interaction Morawetz Estimates and proof of Proposition \ref{decay}}
Along this section we shall denote by $\int$ the integral with respect to $dxdy$
and by $\int \int$ the integral with respect to $dx_1 dy_1 dx_2 dy_2$.
For $x\in\R^d$ and $r\geq 0$, we define $Q^d(x,r)$ to be a $r$ dilation of the unit cube centered at $x$, namely 
$$Q^d(x,r)=x+[0,r]^d\,.$$
The next lemma contains the key global information needed for our analysis.
\begin{lem}
Let $u(t,x,y)\in {\mathcal C}(\R;H^1_{x,y})$ be as in Proposition \ref{decay}.
Then for any $\psi\in C^\infty_0(\R^d)$ we get: 
\begin{equation}\label{1}
\frac d{dt} \int \psi(x) |u(t,x,y)|^2 dxdy=-2 
{\mathcal Im}\int \bar u \nabla_x \psi \cdot \nabla_x u dx dy.\end{equation}
Moreover we have:
\begin{equation}\label{2} -2 
\frac d{dt} {\mathcal Im}\int \bar u \nabla_x (\langle x \rangle) \cdot \nabla_x u dx dy
\end{equation}$$=
4  \int \nabla_x u D^2_x  (\langle x \rangle) \nabla_x \bar u dxdy -
\int \Delta_x^2  (\langle x \rangle) |u|^2 dxdy
+ \frac{2\alpha}{\alpha+2}
\int \Delta_x  (\langle x \rangle) |u|^{\alpha+2} dxdy.
$$
Moreover, for every $r>0$, there exists $C$ such that
\begin{align}\label{inter1}
\int_{\R} &\big ( \sup_{x_0\in \R^d} \int\int_{Q^d(x_0,r)\times (0, 2\pi)}
|u(t, x, y)|^{2}dxdy\big )^\frac{\alpha+4}2 dt
\leq C \|f\|_{H^1_{x,y}}^4\,.
\end{align}
\end{lem}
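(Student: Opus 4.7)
The plan is to establish identities \eqref{1} and \eqref{2} by direct virial--Morawetz computations and then prove \eqref{inter1} via a tensorized interaction-Morawetz estimate combined with a localization to maximizing cubes. For \eqref{1}, I would substitute the equation to obtain $\partial_t|u|^2=2\nabla_{x,y}\cdot\mathrm{Im}(\bar u\,\nabla_{x,y} u)$, multiply by $\psi(x)$, and integrate by parts over $\R^d\times\T$; the $y$-gradient piece of the divergence vanishes by periodicity in $y$ since $\psi$ is $y$-independent. For \eqref{2}, I would differentiate $-2\,\mathrm{Im}\int\bar u\,\nabla_x\langle x\rangle\cdot\nabla_x u$ in $t$, substitute the equation, and integrate by parts twice: the quadratic-in-$\nabla_x u$ contribution produces the Hessian term, a double integration by parts on the piece linear in $|u|^2$ produces the biharmonic term, and the identity $\nabla_x(|u|^{\alpha+2})=(\alpha+2)|u|^\alpha\mathrm{Re}(\bar u\,\nabla_x u)$ followed by one integration by parts produces the nonlinear term with prefactor $\frac{2\alpha}{\alpha+2}$.

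For the first stage of \eqref{inter1}, I would introduce the interaction-Morawetz quantity
\begin{equation*}
M(t)=-2\,\mathrm{Im}\int\!\!\int |u(t,x_2,y_2)|^2\,\bar u(t,x_1,y_1)\,\nabla_{x_1}\langle x_1-x_2\rangle\cdot \nabla_{x_1} u(t,x_1,y_1)\,dx_1dy_1dx_2dy_2.
\end{equation*}
Since $|\nabla\langle\cdot\rangle|\leq 1$, Cauchy--Schwarz together with conservation of mass and energy yield $|M(t)|\lesssim \|f\|_{H^1_{x,y}}^4$ uniformly in $t$. Differentiating $M(t)$ and applying \eqref{1} to the $|u(x_2,y_2)|^2$ factor and a tensorized version of \eqref{2} to the $x_1$ variable produces three terms: a non-negative current--current quadratic form weighted by $D^2_x\langle x_1-x_2\rangle\geq 0$, a biharmonic piece $-\int\!\!\int\Delta^2_x\langle x_1-x_2\rangle|u_1|^2|u_2|^2$, and the nonlinear term $\tfrac{2\alpha}{\alpha+2}\int\!\!\int\Delta_x\langle x_1-x_2\rangle|u_1|^{\alpha+2}|u_2|^2$, whose weight is non-negative since $\Delta_x\langle x\rangle=(d+(d-1)|x|^2)/\langle x\rangle^3>0$. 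Integrating in $t\in\R$ and invoking the uniform bound on $M$ yields
\begin{equation*}
\int_\R\!\!\int\!\!\int\Delta_x\langle x_1-x_2\rangle|u(x_1,y_1)|^{\alpha+2}|u(x_2,y_2)|^2\,dx_1dy_1dx_2dy_2\,dt\lesssim \|f\|_{H^1_{x,y}}^4.
\end{equation*}

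For the second stage, for each $t$ I would pick $x_0^*(t)\in\R^d$ nearly realizing $\sup_{x_0}\int\!\!\int_{Q^d(x_0,r)\times(0,2\pi)}|u(t)|^2$. Since $|x_1-x_2|\leq r\sqrt d$ whenever $x_1,x_2\in Q^d(x_0^*(t),r)$, one has $\Delta_x\langle x_1-x_2\rangle\geq c(r)>0$ on that product cube, and restricting the $(x_1,x_2)$-integration in the previous display to $Q^d(x_0^*(t),r)\times Q^d(x_0^*(t),r)$ gives
\begin{equation*}
c(r)\int\!\!\int_{Q^d(x_0^*,r)\times(0,2\pi)}|u|^{\alpha+2}\cdot\int\!\!\int_{Q^d(x_0^*,r)\times(0,2\pi)}|u|^2\leq \int\!\!\int\Delta_x\langle x_1-x_2\rangle|u_1|^{\alpha+2}|u_2|^2.
\end{equation*}
Hölder's inequality on the cube of measure $r^d\cdot 2\pi$ gives $\int|u|^{\alpha+2}\geq c_r\bigl(\int|u|^2\bigr)^{(\alpha+2)/2}$, so combining with the remaining mass factor produces $c'_r\bigl(\int_{Q^d(x_0^*,r)\times(0,2\pi)}|u|^2\bigr)^{(\alpha+4)/2}$; integrating in $t$ and using the interaction-Morawetz bound yields \eqref{inter1}.

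The main obstacle is the sign of the biharmonic contribution $-\int\!\!\int\Delta^2_x\langle x_1-x_2\rangle|u_1|^2|u_2|^2$ in the differentiated identity for $M$: a direct computation shows $-\Delta^2_x\langle x\rangle\geq 0$ for $d\geq 3$, so the term is non-negative and can simply be discarded, while for $d=1,2$ its sign is mixed and a supplementary argument is needed. It is precisely here that the choice $\varphi=\langle x\rangle$ (rather than the contractive $|x|$-type weight from the Euclidean interaction Morawetz) is essential, as it keeps all quantities smooth and integrable and permits a uniform, dimension-independent scheme.
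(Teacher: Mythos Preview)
Your overall architecture is the same as the paper's: interaction Morawetz with weight $\varphi(x)=\langle x\rangle$, positivity of the quadratic part, and then localization to cubes via the lower bound $\Delta_x\langle x\rangle\geq c(r)>0$ on bounded sets together with H\"older. Identities \eqref{1}--\eqref{2} and the second stage (the passage from the tensorized nonlinear integral to \eqref{inter1}) are handled exactly as in the paper.

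The genuine gap is in how you dispose of the biharmonic term. You separate the quadratic output of $\frac{d}{dt}M(t)$ into a non-negative current--current block and the standalone piece $-\int\!\!\int\Delta_x^2\langle x_1-x_2\rangle\,|u_1|^2|u_2|^2$, and then you rely on the pointwise sign $-\Delta_x^2\langle x\rangle\geq 0$. That sign is indeed correct for $d\geq 3$, but for $d=1,2$ it fails (as you note), and you give no argument there. The paper does \emph{not} treat the biharmonic term by sign. Instead it rewrites it via a double integration by parts (the identity from \cite{PV})
\[
-\int\!\!\int \Delta_x^2\varphi(x_1-x_2)\,|u_1|^2|u_2|^2
=\int\!\!\int \nabla_{x_1}(|u_1|^2)\,D_x^2\varphi(x_1-x_2)\,\nabla_{x_2}(|u_2|^2),
\]
and then combines this (generally sign-indefinite) contribution with the Hessian and cross pieces through an algebraic identity: setting
$A=u_1\nabla_{x_2}\bar u_2+\bar u_2\nabla_{x_1}u_1$ and $B=u_1\nabla_{x_2}u_2-u_2\nabla_{x_1}u_1$, one checks that the full quadratic block equals $4\,A\,D_x^2\varphi\,\bar A\geq 0$ because $D_x^2\langle x\rangle\geq 0$. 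This is what makes the argument dimension-independent; your scheme, as written, is not. To close the gap for $d=1,2$ you should not discard the biharmonic contribution separately but rather transform it as above and fold it into the Hessian/cross combination before invoking positivity.
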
 
\begin{remark}\label{rigourint}
By \eqref{1} we see that the l.h.s. in \eqref{2} can be considered, at least formally, as
the second derivative w.r.t. time of 
$\int \int \langle x \rangle |u(t, x,y)|^2 dxdy$, which is not a well defined quantity
for $u\in H^1_{x,y}$.
However, the quantity involved on the l.h.s. in \eqref{2} is well-defined since
$\nabla_x \langle x\rangle\in L^\infty_x$ and $u\in H^1_{x,y}$. 
For this reason we have decided to write
in terms of first derivative \eqref{1} and \eqref{2}.
\end{remark}
\begin{proof}
The proof of \eqref{1} and \eqref{2} follow by standard considerations and we skip it.
Concerning the proof of \eqref{inter1} we follow \cite{Iteam}. From now on
we define $\varphi(x)=\langle x \rangle$
and we make some formal computations. At the end of the proof
we shall explain how to make rigorous the arguments below. Write 
\begin{align}\nonumber \frac{d}{dt} \int \int |u(t,x_1, y_1)|^2 \varphi(x_1-x_2) 
|u(t, x_2, y_2)|^2 dx_1dx_2dy_1dy_2\\\nonumber 
= \int \big (\int \frac d{dt} |u(t, x_1, y_1)|^2 \varphi(x_1-x_2) dx_1dy_1\big )
|u(t, x_2, y_2)|^2 dx_2dy_2 \\\nonumber 
+ \int \big (\int \frac d{dt} |u(t, x_2, y_2)|^2 \varphi(x_1-x_2)  dx_2dy_2\big )
|u(t, x_1, y_1)|^2 dx_1dy_1.\nonumber \end{align}
From now on we shall drop the variable $t$ for simplicity and hence
we shall write $u(t, x_i, y_i)=u(x_i, y_i)$.
By combining the identity above with \eqref{1} we get
\begin{align}\label{ddt}\frac{d}{dt} \int \int |u(x_1, y_1)|^2 \varphi(x_1-x_2) 
|u(x_2, y_2)|^2 dx_1dx_2dy_1dy_2\\\nonumber 
= -2 
{\mathcal Im}\int \int \bar u(x_1, y_1)\nabla_{x_1} u(x_1, y_1) \cdot \nabla_{x} 
\varphi(x_1-x_2)
|u(x_2, y_2)|^2  dx_1 dy_1 dx_2dy_2 \\\nonumber +2 
{\mathcal Im}\int \int \bar u(x_2, y_2)\nabla_{x_2} u(x_2, y_2) \cdot \nabla_{x} 
\varphi(x_1-x_2) 
|u(x_1, y_1)|^2 dx_1dy_1 dx_2 dy_2.\end{align}
Next notice that
\begin{align}\label{iMpO}\frac{d^2}{dt^2} \int \int |u(x_1, y_1)|^2 \varphi(x_1-x_2) 
|u(x_2, y_2)|^2 dx_1dx_2dy_1dy_2\\\nonumber 
=  \int \big (\int  \frac{d^2}{dt^2} |u(x_1, y_1)|^2 \varphi(x_1-x_2) dx_1 dy_1\big )
|u(x_2, y_2)|^2 dx_2 dy_2  \\\nonumber
+\int \big (\int \frac{d^2}{dt^2} |u(x_2, y_2)|^2 \varphi(x_1-x_2) dx_2 dy_2\big )
|u(x_1, y_1)|^2 dx_1 dy_1 \\
\nonumber  
+ 2  \int \big (\frac{d}{dt} \int  |u(x_1, y_1)|^2 \varphi(x_1-x_2) dx_1 dy_1\big )
\frac d{dt}|u(x_2, y_2)|^2 dx_2 dy_2\\\nonumber=I+II+III.\end{align}
 By using \eqref{1} twice we get
\begin{align}\label{carl1}III
\\\nonumber =-4 \int \frac d{dt} |u(x_2, y_2)|^2 
\big ({\mathcal Im}\int \bar u(x_1, y_1) \nabla_{x_1} \varphi(x_1-x_2) \cdot  \nabla_{x_1} 
u(x_1, y_1) dx_1 dy_1\big ) dx_2 dy_2\\\nonumber 
= 8 
{\mathcal Im}\int \bar u(x_2, y_2) \nabla_{x_2} u(x_2, y_2)  \cdot
\nabla_{x_2} ({\mathcal Im}\int F(x_1, x_2, y_1)dx_1 dy_1) dx_2 dy_2
\\\nonumber
=-8 \int \int  
V(x_1, y_1)D^2_x \varphi (x_1-x_2) V(x_2, y_2)
dx_1 dy_1dx_2dy_2,
\end{align}
where \begin{align}
\nonumber F(x_1, x_2, y_1)=\bar u(x_1, y_1) \nabla_{x_1} \varphi(x_1-x_2)
\cdot  \nabla_{x_1} 
u(x_1, y_1)\\\nonumber
V(x,y)={\mathcal Im} (\bar u(x,y) \nabla_{x} u(x,y)).
\end{align}
Moreover the term $I$ in the r.h.s. of \eqref{iMpO} can be rewritten as follows:
\begin{align}\nonumber
I\\\nonumber =4  \int \int \nabla_{x_1} u(x_1, y_1) D^2_x \varphi (x_1-x_2)
 \nabla_{x_1} \bar u(x_1, y_1)|u(x_2, y_2)|^2 dx_1dy_1 dx_2dy_2 \\\nonumber-
 \int \int  \Delta_x^2 \varphi(x_1-x_2) |u(x_1, y_1)|^2 |u(x_2, y_2)|^2 dx_1dy_1 dx_2dy_2
\\\nonumber + \frac{2\alpha}{\alpha+2} \int \int
\Delta_x \varphi (x_1-x_2)
|u(x_1, y_1)|^{\alpha+2} |u(x_2, y_2)|^2 dx_1dy_1 dx_2dy_2,
\end{align}
that by the following identity (obtained by integration by parts, see \cite{PV})
$$-\int \int \Delta^2_x \varphi(x_1-x_2) |u(x_1, y_1)|^2|u(x_2, y_2)|^2 dx_1dy_1dx_2dy_2$$
$$= \int\int \nabla_{x_1} (|u(x_1, y_1)|^2) D^2_x \varphi(x_1-x_2) \nabla_{x_2} (|u(x_2, y_2)|^2)
dx_1 dy_1dx_2 dy_2$$
becomes:
\begin{align}\label{carl2}I\\\nonumber =4  \int \int  \nabla_{x_1} u(x_1, y_1) D^2_x \varphi(x_1-x_2) \nabla_{x_1} \bar u(x_1, y_1)|u(x_2, y_2)|^2 dx_1 dy_1dx_2 dy_2 \\\nonumber
 +\int \int \nabla_{x_1} (|u(x_1, y_1)|^2) D^2_x \varphi(x_1-x_2)\nabla_{x_2} (|u(x_2, y_2)|^2)
dx_1 dy_1dx_2 dy_2\\\nonumber
+ \int \int \frac{2\alpha}{\alpha+2}
\Delta_x \varphi (x_1-x_2)|u(x_1, y_1)|^{\alpha+2} |u(x_2, y_2)|^2 dx_1dy_1 dx_2dy_2.
\end{align}
By similar arguments the term $II$ in the r.h.s. of \eqref{iMpO} can be rewritten as follows:
\begin{align}\label{carl3}II\\\nonumber
=4  \int \int \nabla_{x_2} u(x_2, y_2) D^2_x \varphi(x_1-x_2) \nabla_{x_2} \bar u(x_2, y_2)|u(x_1, y_1)|^2 \\\nonumber+\int \int \nabla_{x_1} |u(x_1, y_1)|^2 D^2_x \varphi(x_1-x_2)\nabla_{x_2} |u(x_2, y_2)|^2
\\\nonumber+ \frac{2\alpha}{\alpha+2}
\int \int \Delta_x\varphi(x_1-x_2) |u(x_2, y_2)|^{\alpha+2} |u(x_1, y_1)|^2 dx_1dy_1 dx_2dy_2.
\end{align}
Next we introduce the vectors
$A(t,x_1, y_1, x_2, y_2), B(t,x_1, y_1, x_2, y_2)$ defined as follows:
$$A(t,x_1, y_1, x_2, y_2):=u(x_1, y_1) \nabla_{x_2}\bar u(t, x_2, y_2)
+ \bar u(x_2, y_2)\nabla_{x_1} u(x_1, y_1)
$$
and 
$$B(t,x_1, y_1, x_2, y_2):=u(x_1, y_1) \nabla_{x_2} u(x_2, y_2)
- u(x_2, y_2)\nabla_{x_1} u(x_1, y_1).$$
By direct computation we get
\begin{align}\label{guam}
2 A D^2_x \varphi(x_1-x_2) \bar A
+2 B D^2_x \varphi(x_1-x_2) \bar B\\
\nonumber = 4 \nabla_{x_1} u(x_1, y_1)
D^2_x \varphi(x_1-x_2)  \nabla_{x_1} \bar u(x_1, y_1)|u(x_2, y_2)|^2\\
\nonumber + 4 
\nabla_{x_2} u(x_2, y_2) D^2_x \varphi(x_1-x_2) \nabla_{x_2} \bar u(x_2, y_2)|u(x_1, y_1)|^2
\\
\nonumber 
-8 \big ({\mathcal Im} \bar u(x_1, y_1) \nabla_{x_1} u(x_1, y_1)\big )
D^2_x \varphi(x_1-x_2)
\big ({\mathcal Im} \bar u(x_2,y_2) \nabla_{x_2} u(x_2,y_2)\big)
\end{align}
and also
\begin{equation}\label{giam}2 A D^2_x \varphi(x_1-x_2) \bar A
+2 B D^2_x \varphi(x_1-x_2) \bar B\end{equation}$$+2 \nabla_{x_1} |u(x_1, y_1)|^2
D^2_x \varphi(x_1-x_2) \nabla_{x_2} |u(x_2, y_2)|^2)
=4 A D^2_x \varphi(x_1-x_2) \bar A\geq 0.$$
By combining \eqref{guam} and \eqref{giam}  we get
\begin{align}\label{prca}4  \nabla_{x_1} u(x_1, y_1)  
D^2_x \varphi(x_1-x_2) \nabla_{x_1} \bar u(x_1, y_1)|u(x_2, y_2)|^2 \\\nonumber
+ 4  
\nabla_{x_2} u(x_2, y_2) D^2_x \varphi(x_1-x_2) \nabla_{x_2} \bar u(x_2, y_2)|u(x_1, y_1)|^2\\\nonumber
-8 ({\mathcal Im} \bar u(x_1, y_1) \nabla_{x_1} u(x_1, y_1))
D^2_x \varphi(x_1-x_2)
({\mathcal Im} \bar u(x_2,y_2) \nabla_{x_2} u(x_2,y_2)\\\nonumber
+2 \nabla_{x_1} (|u(x_1, y_1)|^2) D^2_x \varphi(x_1-x_2) 
\nabla_{x_2} (|u(x_2, y_2)|^2)
\geq 0,\end{align} and hence
by \eqref{carl1}, \eqref{carl2}, \eqref{carl3} and \eqref{prca} we obtain
$$\frac{d^2}{dt^2} \int\int |u(x_1, y_1)|^2 \varphi(x_1-x_2) 
|u(x_2, y_2)|^2 dx_1dx_2dy_1dy_2=I+II+III $$
$$\geq \frac{4\alpha}{\alpha+2}
\int \int \Delta_x \varphi (x_1-x_2)|u(x_1, y_1)|^{\alpha+2} |u(x_2, y_2)|^2 dx_1dy_1 dx_2dy_2.$$
Integration in time  gives
\begin{align}\label{impbe}\frac{d}{dt} (\int \int |u(x_1, y_1)|^2 \varphi(x_1-x_2) 
|u(x_2, y_2)|^2 dx_1dx_2dy_1dy_2){_{t=\infty}}
\\\nonumber - \frac{d}{dt} (\int \int |u(x_1, y_1)|^2 \varphi(x_1-x_2) 
|u(x_2, y_2)|^2 dx_1dx_2dy_1dy_2){_{t=0}}\\\nonumber =\int (I+II+III) dt\\
\nonumber \geq \frac{4\alpha}{\alpha+2}
\int \int \int \Delta_x \varphi (x_1-x_2)|u(x_1, y_1)|^{\alpha+2} |u(x_2, y_2)|^2 dt dx_1dy_1 dx_2dy_2.\end{align}
Notice that by \eqref{ddt} the l.h.s. can be controlled by $C\|f\|_{H^1_{x,y}}^4$
provided that we choose $\varphi=\langle x \rangle$.
On the other hand 
we have $\inf_{Q^d(0,2r)} \Delta_x (\langle x \rangle)>0$, hence we get 
\begin{align}
\int_{\R} \sup_{x_0\in \R^d}&\big (\int\int_{(Q^d(x_0,r))^2\times (0, 2\pi)^2}
|u(x_2, y_2)|^{\alpha+2} |u(x_1, y_1)|^{2}dx_1dx_2 dy_1dy_2\big ) dt
\nonumber
\\\nonumber
&\leq C \|f\|_{H^1_{x,y}}^4
\end{align}
where we used the notation $A^2=A\times A$ for any general set $A$.
In turn by the H\"older inequality we get
$$\int_{Q^d(x_0,r)\times (0, 2\pi)}  |u(x_2, y_2)|^{\alpha+2}dx_2 dy_2 \geq C_{r}
(\int_{Q^d(x_0,r)\times (0, 2\pi)} |u(x_2, y_2)|^{2}dx_2 dy_2)^\frac{\alpha+2}{2}$$
and we conclude the proof of \eqref{inter1}.

Indeed, the computation above is formal since the quantity 
$$\int \int |u(x_1, y_1)|^2 \langle x_1-x_2 \rangle 
|u(x_2, y_2)|^2 dx_1dx_2dy_1dy_2$$
appearing in \eqref{ddt}
it is not well-defined
 for $u\in H^1_{x,y}$.
Following the Remark~\ref{rigourint}, 
we can make rigorous the argument above
by writing the following identity
$$\frac d{dt} J(t)=I+II+III,$$
where the quantity
\begin{align*}J(t)=-2 
{\mathcal Im}\int \int \bar u(x_1, y_1)\nabla_{x_1} u(x_1, y_1) \cdot \nabla_{x} 
\varphi(x_1-x_2)
|u(x_2, y_2)|^2  dx_1 dy_1 dx_2dy_2 \\\nonumber +
2{\mathcal Im}\int \int \bar u(x_2, y_2)\nabla_{x_2} u(x_2, y_2) \cdot \nabla_{x} 
\varphi(x_1-x_2) 
|u(x_1, y_1)|^2 dx_1dy_1 dx_2 dy_2\end{align*}
is meaningful for $u\in H^1_{x,y}$.

\end{proof}

\noindent{\em Proof of Proposition \ref{decay}.\,}
We follow the approach in \cite{Vis}.
First, we write the following well-known localized Gagliardo-Nirenberg inequality:
\begin{equation}\label{GNprecised}
\|v\|_{L^{2+4/(d+1)}_{x,y}}\leq C \sup_{x\in \R^{d}} 
\Big (\|v\|_{L^2_{Q^{d}(x,1)\times (0,2\pi)}}\Big )^{2/(d+3)} 
\|v\|_{H^1_{x,y}}^{(d+1)/(d+3)}.
\end{equation}
Of course it is sufficient to show that
\begin{equation}\label{part}
\lim_{t\rightarrow \pm \infty} \|u(t, x,y)\|_{L^{2+4/(d+1)}_{x,y}}=0.
\end{equation}
In fact the decay of the $L^q_{x,y}$ norm for $2<q<\frac{2(d+1)}{d-1}$ follows
by combining \eqref{part} with the bound 
\begin{equation}\label{unifbound}\sup_{t\in \R} \|u(t, x,y)\|_{H^1_{x,y}}
<\infty.\end{equation}
Next, assume by the absurd that \eqref{part} is false, then
by \eqref{GNprecised} and by \eqref{unifbound} we deduce 
the existence of a sequence $(t_n, x_n)\in \R\times \R^d$ with
$|t_n|\rightarrow \infty$  and $\epsilon_0>0$ such that
\begin{equation}\label{loc}
\inf_n \|u(t_n,x,y)\|_{L^2_{Q^{d}(x_n,1)\times (0, 2\pi)}}
=\epsilon_0.
\end{equation}
For simplicity we can assume that $t_n\rightarrow \infty$ (the case 
$t_n\rightarrow -\infty$ can be treated by a similar argument).\\
Notice that by \eqref{1} in conjunction with \eqref{unifbound}
we get
$$\sup_{n,t} \big | \frac{d}{dt} \int \chi(x-x_n) |u(t, x, y)|^2dx dy
\big |<\infty,$$
where $\chi(x)$ is a smooth and non-negative cut-off function taking values in $[0,1]$ such that
$\chi(x)=1$ for $x\in Q^d(0,1)$ and $\chi(x)=0$ for $x\notin Q^d(0,2)$.
By combining this fact with \eqref{loc} then we get the existence of $T>0$ such that
\begin{equation}\label{locloc}\inf_{n} \big (\inf_{t\in (t_n,t_n+T)}  \|u(t,x,y)\|_{L^2_{Q^{d}(x_n,2)\times (0, 2\pi)}}\big )\geq \epsilon_0/2.
\end{equation}
Notice that since $t_n\rightarrow \infty$ then we can assume (modulo subsequence)
that the intervals 
$(t_n, t_n+T)$ are disjoint and hence we get
a contradiction with \eqref{inter1}.
\hfill$\Box$
\section{Fixing the admissible exponents for the scattering analysis}
In this section we prepare some result useful to prove Theorem \ref{main}.
\begin{prop}\label{Algebric0}
Let $d\geq 1$ and $4/d<\alpha<4/(d-1)$ be fixed, and
$s=\frac{\alpha d-4}{2\alpha}$.
Then there exists $\theta\in (0,1)$ 
and $(q_\theta, r_\theta, \tilde{q}_\theta, \tilde{r}_\theta),$ in such a way that: \begin{equation}\label{easyconitionp} 0<\frac 1{q_\theta},\frac 1
{r_\theta}, \frac 1{\tilde q_\theta}, \frac 1{\tilde r_\theta}<\frac 12
\end{equation} 
\begin{align}
\label{fewr}\frac 1 {q_\theta}+ \frac 1{\tilde q_\theta}<1, \quad \quad 
& \frac{d-2}{d}<\frac{r_\theta}{\tilde{r}_\theta}< \frac{d}{d-2}\\
\label{dropd12ferro}
\frac{1}{q_\theta}+\frac{d}{r_\theta}<\frac{d}{2}, \quad \quad & 
\frac{1}{\tilde{q}_\theta}+\frac{d}{\tilde{r}_\theta}<\frac{d}{2}\\
\label{eq.ass102v3ferro}  \frac{2}{q_\theta} +\frac{d}{r_\theta}=\frac d2- s,\quad \quad &
\frac{2}{q_\theta} + \frac d{r_\theta} + \frac{2}{\tilde{q}_\theta} +\frac d{\tilde r_\theta}=d,\\
\label{eq.ass9bispferro} \frac{1}{ (\alpha+1)\tilde{q}_\theta'}= \frac{\theta}{q_\theta}, \quad \quad& 
\frac1{(\alpha+1)\tilde r_\theta'}=\frac{\theta}{r_\theta} +\frac{2(1-\theta)}{\alpha d}.&
\end{align}
For $d=1,2$ we get the same conclusion provided that we drop 
conditions \eqref{fewr}.\\
Moreover we can also assume that
\begin{equation}\label{impBosUl3}
\frac{\alpha}{q_\theta}+\frac{\alpha d}{2r_\theta}=1,\quad \quad \ \frac \alpha {r_\theta}<1.
\end{equation}
\end{prop}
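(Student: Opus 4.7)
My plan is a continuity argument based on Lemma \ref{AlgebricCauchy}, exactly in the spirit of how Proposition \ref{AlgebricCauchyfer} was derived from that same lemma. The key observation is that setting $\theta=1$ in \eqref{eq.ass9bispferro} recovers precisely the identities \eqref{eq.ass9bisv} of Lemma \ref{AlgebricCauchy}, while the remaining conditions \eqref{easyconitionp}, \eqref{fewr}, \eqref{dropd12ferro}, \eqref{eq.ass102v3ferro} and \eqref{impBosUl3} coincide with the strict inequalities and identities already handled there. Since the proposition now requires $\alpha>4/d$ strictly, we have $s=(\alpha d-4)/(2\alpha)>0$, so Lemma \ref{AlgebricCauchy} applies and provides a quadruple $(q,r,\tilde q,\tilde r)$ for which every strict inequality is verified.

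Given such $(q,r,\tilde q,\tilde r)$, for $\theta\in(0,1)$ close to $1$ I would set $q_\theta:=q$, $r_\theta:=r$ and define $\tilde q_\theta,\tilde r_\theta$ so that \eqref{eq.ass9bispferro} holds by construction, i.e.
\[
\frac{1}{\tilde q_\theta}:=1-\frac{\theta(\alpha+1)}{q}, \qquad
\frac{1}{\tilde r_\theta}:=1-(\alpha+1)\Big(\frac{\theta}{r}+\frac{2(1-\theta)}{\alpha d}\Big).
\]
As $\theta\to 1^-$ the quadruple $(q_\theta,r_\theta,\tilde q_\theta,\tilde r_\theta)$ converges to $(q,r,\tilde q,\tilde r)$, so the strict inequalities \eqref{easyconitionp}, \eqref{fewr}, \eqref{dropd12ferro} persist for $\theta$ sufficiently close to $1$. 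The first identity in \eqref{eq.ass102v3ferro} is automatic from $q_\theta=q,r_\theta=r$, and \eqref{impBosUl3} follows from it together with the choice $s=(\alpha d-4)/(2\alpha)$ by the same two-line argument as in the opening of the proof of Lemma \ref{AlgebricCauchy}.

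The only condition that does not follow by pure continuity is the second identity in \eqref{eq.ass102v3ferro}, and this is where I expect the main (and really the only) obstacle to lie. To verify it I would plug in the definitions of $\tilde q_\theta,\tilde r_\theta$ and use $\tfrac{2}{q}+\tfrac{d}{r}=\tfrac{2}{\alpha}$ (which follows from the first identity in \eqref{eq.ass102v3ferro} with the given $s$) to compute
\[
\frac{2}{\tilde q_\theta}+\frac{d}{\tilde r_\theta}=2+d-\theta(\alpha+1)\Big(\frac{2}{q}+\frac{d}{r}\Big)-\frac{2(\alpha+1)(1-\theta)}{\alpha}=d-\frac{2}{\alpha}=\frac{d}{2}+s,
\]
which combined with the first identity gives $\tfrac{2}{q_\theta}+\tfrac{d}{r_\theta}+\tfrac{2}{\tilde q_\theta}+\tfrac{d}{\tilde r_\theta}=d$, as required. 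The specific correction $\tfrac{2(1-\theta)}{\alpha d}$ appearing in \eqref{eq.ass9bispferro} is exactly what causes the $\theta$-dependence to cancel here, and once this algebraic identity is checked the proof of the proposition is complete.
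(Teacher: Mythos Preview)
Your proof is correct and follows essentially the same approach as the paper: fix $(q_\theta,r_\theta)=(q,r)$ from Lemma \ref{AlgebricCauchy}, define $(\tilde q_\theta,\tilde r_\theta)$ through \eqref{eq.ass9bispferro}, and use continuity as $\theta\to 1^-$ to recover the strict inequalities from the corresponding ones in Lemma \ref{AlgebricCauchy}. The paper simply asserts that \eqref{eq.ass102v3ferro} holds for every $\theta$, whereas you actually verify the second identity via the cancellation $\theta(\alpha+1)\cdot\tfrac{2}{\alpha}+\tfrac{2(\alpha+1)(1-\theta)}{\alpha}=\tfrac{2(\alpha+1)}{\alpha}$; this extra computation is correct and makes the argument more explicit.
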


\begin{remark}\label{appl}
By combining Propositions \ref{StriProd} and
\ref{StriProd2}, we get the following estimate:
\begin{align}\label{eq:202p90}
 &\|e^{-it\Delta_{x,y}} f\|_{L^{q_\theta}_t L^{r_\theta}_xH_y^{\gamma}} +\left \|\int_0^t e^{ -i (t-\tau) \Delta_{x,y}} F(\tau)
d\tau\right\|_{L^{q_\theta}_t L^{r_\theta}_xH_y^{\gamma}}\\
\nonumber &\leq C(\|f\|_{ H_x^{s}H_y^{\gamma}}+\|F\|_{L^{\tilde{q_\theta}'}_t
L^{\tilde{r}_\theta'}_xH_y^{\gamma}})
\end{align}
for every $\gamma\in \R$.
\end{remark}

\noindent{\em Proof of Proposition \ref{Algebric0}.}
For the moment we let $\theta$ to be free, and at the end we shall select it 
according with a continuity argument.
We fix $(q_\theta, r_{\theta})=(q,r)$ (where $q,r$ are given in Lemma \ref{AlgebricCauchy})
and we choose $\tilde q_\theta$ and $\tilde r_\theta$
as follows:
$$\frac 1{(\alpha+1) \tilde q_\theta'}= \frac{\theta}{q},
\quad \quad \frac 1{(\alpha+1) \tilde r_\theta '}=\frac \theta r + \frac{2(1-\theta)}{\alpha d}.$$
By this choice \eqref{eq.ass102v3ferro} and \eqref{eq.ass9bispferro} turn out
to be satisfied
for every $\theta$. On the other hand by \eqref{eq.ass9bisv}
we have
$$\lim_{\theta\rightarrow 1} \frac 1{\tilde q_\theta}=\frac 1{\tilde q},
\quad  \quad \lim_{\theta\rightarrow 1} \frac 1{\tilde r_\theta}=\frac 1{\tilde r},
$$
where $\tilde q, \tilde r$ are given by  Lemma \ref{AlgebricCauchy}.
Hence conditions \eqref{fewr} and \eqref{dropd12ferro} 
follow by \eqref{eq.ass3v3} and \eqref{dropd12} 
provided that we choose $\theta$ close enough to the value $\theta=1$.

\hfill$\Box$

The next lemma, which is a version of Lemma \ref{sTrbis} 
where we replace inequality by identity in the last condition, it will be useful in the sequel.

\begin{lem}\label{sTr}
Let $d\geq 1$ and $4/d<\alpha<4/(d-1)$ be fixed.
Then there exist $2<l\leq \infty, 2\leq p\leq \infty$
such that:
\begin{align}
\label{st1}&\frac 2l + \frac 1p=\frac 12,\\
\label{st2}&\frac 1{p'}=\frac 1p+\frac \alpha {r_\theta},\\
\label{st3}&\frac 1{l'}=\frac 1l+ \frac \alpha {q_\theta},
\end{align}
where $(q_\theta,r_\theta)$ is any couple given by Proposition
\ref{Algebric0}.
\end{lem}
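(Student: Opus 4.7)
The plan is to observe that \eqref{st2} and \eqref{st3} uniquely determine $p$ and $l$ as solutions of linear equations, and then to check that the Strichartz admissibility relation \eqref{st1} follows automatically from the algebraic identity \eqref{impBosUl3} built into Proposition~\ref{Algebric0}.

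First I would rewrite \eqref{st2} and \eqref{st3} in the equivalent form
\[
\frac{1}{p}=\frac{1}{2}-\frac{\alpha}{2r_\theta},\qquad
\frac{1}{l}=\frac{1}{2}-\frac{\alpha}{2q_\theta},
\]
which defines $l$ and $p$ unambiguously. Substituting these into the left-hand side of the admissibility condition, a direct computation yields
\[
\frac{2}{l}+\frac{d}{p}=1+\frac{d}{2}-\frac{\alpha}{q_\theta}-\frac{\alpha d}{2r_\theta}=\frac{d}{2},
\]
where the last equality is precisely the first identity in \eqref{impBosUl3}. This verifies \eqref{st1}.

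Next I would check the range constraints $2<l\leq\infty$ and $2\leq p\leq\infty$. From $1/l=\tfrac{1}{2}-\tfrac{\alpha}{2q_\theta}$ the bound $l>2$ reduces to $\alpha/q_\theta>0$, which is trivial, while $l\leq\infty$ reduces to $\alpha/q_\theta\leq 1$; the latter follows from \eqref{impBosUl3} together with positivity of $\alpha d/(2r_\theta)$ (which in fact yields the strict inequality $l<\infty$). Analogously, $p\geq 2$ is automatic from $\alpha/r_\theta>0$, while $p\leq\infty$ is the second bound $\alpha/r_\theta<1$ in \eqref{impBosUl3}.

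The proof contains no real obstacle; it is a bookkeeping verification, entirely parallel in spirit to the derivation of Lemma~\ref{sTrbis} from Proposition~\ref{AlgebricCauchyfer}. The substantive point is that \eqref{impBosUl3} was designed precisely so that the dual pair $(l,p)$ dictated by matching $\alpha$-powers in the nonlinearity is itself Strichartz-admissible in $\R^d$; the fact that we now obtain equality (rather than strict inequality) in \eqref{st3} reflects the corresponding equality in the first identity of \eqref{impBosUl3}, in contrast with the strict inequality \eqref{impBosUl3fer} which produced only the strict inequality \eqref{st3bis} in Lemma~\ref{sTrbis}.
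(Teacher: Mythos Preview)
Your argument is correct and is essentially the same approach as the paper's, which simply says ``the same proof as in Lemma~\ref{sTrbis} can be repeated.'' The only cosmetic difference is the order in which you solve the linear system: the paper (in the proof of Lemma~\ref{sTrbis}) fixes $p$ from \eqref{st2bis} and then $l$ from the Strichartz relation \eqref{st1bis}, and finally verifies \eqref{st3bis} via \eqref{impBosUl3fer}; you instead fix $p$ and $l$ from \eqref{st2}--\eqref{st3} and verify \eqref{st1} via \eqref{impBosUl3}. Either way, the three conditions plus the constraint \eqref{impBosUl3} form a consistent linear system, so the two computations are interchangeable.
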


The same proof as in Lemma~\ref{sTrbis}
can be repeated.  
 \section{Proof of Theorem \ref{main}}\label{thm}
 

 \begin{prop}\label{inpa}
 Let $(q_\theta,r_\theta)$ be as in 
 Proposition \ref{Algebric0} and $u(t,x,y)\in {\mathcal C}(\R;H^1_{x,y})$ be the unique global solution to \eqref{NLS}, with
 $4/d<\alpha<4/(d-1)$,
 then 
 \begin{equation}\label{set}u(t, x, y)\in L^{q_\theta}_t L^{r_\theta}_x H^{1/2+\delta}_y
 \end{equation}
 for some $\delta>0$.
  \end{prop}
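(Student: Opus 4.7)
The plan is to establish $u\in L^{q_\theta}_t L^{r_\theta}_x H^{1/2+\delta}_y(\R)$ via a continuity bootstrap on the half-lines $[T_\eta,+\infty)$ and $(-\infty,-T_\eta]$, using the inhomogeneous Strichartz estimate of Remark~\ref{appl} in conjunction with the time decay supplied by Proposition~\ref{decay}. Setting $g(t,x):=\|u(t,x,\cdot)\|_{H^{1/2+\delta}_y}$, the combination of Lemma~\ref{TzBil} and the Sobolev embedding $H^{1/2+\delta}_y\hookrightarrow L^\infty_y$ gives $\|u|u|^\alpha(t,x,\cdot)\|_{H^{1/2+\delta}_y}\lesssim g(t,x)^{\alpha+1}$. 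Factoring $g^{\alpha+1}=g^{(\alpha+1)\theta}\cdot g^{(\alpha+1)(1-\theta)}$ and applying H\"older in $(t,x)$ with the first block of factors in $L^{q_\theta}_t L^{r_\theta}_x$ and the second in $L^\infty_t L^{\alpha d/2}_x$—an allocation whose admissibility is precisely encoded by the identities \eqref{eq.ass9bispferro}—yields, on any time interval $I$,
\begin{equation*}
\|u|u|^\alpha\|_{L^{\tilde q_\theta'}_t L^{\tilde r_\theta'}_x H^{1/2+\delta}_y(I)}\leq C\,\|u\|_{L^{q_\theta}_t L^{r_\theta}_x H^{1/2+\delta}_y(I)}^{(\alpha+1)\theta}\,\|u\|_{L^\infty(I;\,L^{\alpha d/2}_x H^{1/2+\delta}_y)}^{(\alpha+1)(1-\theta)}.
\end{equation*}

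Next I would exploit that the scattering assumption $4/d<\alpha<4/(d-1)$ is equivalent to $\alpha d/2\in(2,2d/(d-1))$, which is exactly the integrability window of Lemma~\ref{inter}. Combining Proposition~\ref{decay} with the uniform $H^1_{x,y}$-bound from energy conservation, Lemma~\ref{inter} (applied along any sequence $t_n\to\pm\infty$) yields
\begin{equation*}
\lim_{|t|\to\infty}\|u(t)\|_{L^{\alpha d/2}_x H^{1/2+\delta}_y}=0
\end{equation*}
for some $\delta>0$. Fix $\eta>0$ small and $T_\eta$ large so that the above norm is bounded by $\eta$ for $|t|\geq T_\eta$. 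Applying the Strichartz estimate \eqref{eq:202p90} to the Duhamel formula on $[T_\eta,T]$, together with the nonlinear estimate, the quantity $h(T):=\|u\|_{L^{q_\theta}_t L^{r_\theta}_x H^{1/2+\delta}_y([T_\eta,T])}$ satisfies
\begin{equation*}
h(T)\leq A+C\eta^{(\alpha+1)(1-\theta)}\,h(T)^{(\alpha+1)\theta},
\end{equation*}
where $A=C\|u(T_\eta)\|_{H^s_x H^{1/2+\delta}_y}$ is finite uniformly in $T_\eta$ by $H^1$-conservation and Sobolev embedding. Since Proposition~\ref{Algebric0} permits $\theta$ arbitrarily close to $1$, the exponent $(\alpha+1)\theta$ exceeds $1$; since $h(T_\eta)=0$ and $h$ is continuous (finiteness on subintervals coming from the local theory, as explained below), the standard continuity argument forces $h(T)\leq 2A$ uniformly in $T>T_\eta$ provided $\eta$ is chosen small. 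Letting $T\to\infty$ gives $u\in L^{q_\theta}_t L^{r_\theta}_x H^{1/2+\delta}_y([T_\eta,\infty))$, and the analogous argument on $(-\infty,-T_\eta]$ is symmetric.

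It remains to handle the compact interval $[-T_\eta,T_\eta]$. Theorem~\ref{halflwpStrong} together with $H^1$-conservation provides the solution with a uniform local time-step in the Strichartz space of the local theory; iterating over finitely many subintervals gives a finite bound in the local-theory pair $(q,r)$ from Proposition~\ref{AlgebricCauchyfer}. This bound can be converted into a bound in the scattering pair $(q_\theta,r_\theta)$ by one further application of \eqref{eq:202p90} combined with a H\"older decomposition of the nonlinearity that exploits the strict inequality $\frac{1}{\tilde q'}>\frac{\alpha+1}{q}$ present in the local exponents (and hence absorbs the nonlinear contribution into a small time-length factor).

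The principal obstacle is the equality (rather than strict inequality) in \eqref{eq.ass9bispferro}: the critical scaling forbids the time-length smallness that closes the local theory, so the bootstrap at infinity must extract its smallness from a different source. Proposition~\ref{Algebric0}'s introduction of the parameter $\theta$ is engineered precisely so that the auxiliary factor $\|u\|_{L^\infty_t L^{\alpha d/2}_x H^{1/2+\delta}_y}$ appears in the H\"older split and, thanks to the range match $\alpha d/2\in(2,2d/(d-1))$ with Lemma~\ref{inter}, decays at infinity by Proposition~\ref{decay}. Simultaneously, choosing $\theta$ close to $1$ keeps $(\alpha+1)\theta>1$ so that the continuity bootstrap closes.
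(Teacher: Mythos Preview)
Your proof is correct and follows essentially the same route as the paper: Strichartz estimate \eqref{eq:202p90}, the pointwise nonlinear bound from Lemma~\ref{TzBil} plus $H^{1/2+\delta}_y\hookrightarrow L^\infty_y$, the H\"older split dictated by \eqref{eq.ass9bispferro} producing the factor $\|u\|_{L^\infty_t L^{\alpha d/2}_x H^{1/2+\delta}_y}$, and smallness of this factor at infinity via Proposition~\ref{decay} combined with Lemma~\ref{inter}, closing by continuity. One simplification for the compact interval: since the proof of Proposition~\ref{Algebric0} takes $(q_\theta,r_\theta)$ equal to the pair $(q,r)$ of Lemma~\ref{AlgebricCauchy}, while the local-theory pair in Proposition~\ref{AlgebricCauchyfer} is $(q+\epsilon,r)$ with the \emph{same} $r$ and a larger time exponent, the conversion on $[-T_\eta,T_\eta]$ is a single H\"older-in-time step rather than the extra Strichartz application you sketch.
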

 \begin{proof}
 We will apply a $H^{1/2+\delta}_y$ valued version of the analysis $H^s_x$ critical analysis of \cite{CW}.
 Notice that in Proposition \ref{Algebric0}
 we have $0<s<\frac 12$ and hence by choosing $\delta>0$ small ,
 we can control $\|.\|_{H^s_xH^{1/2+\delta}_y}$
 by $\|.\|_{H^1_{x,y}}$.
 By combining this fact with Remark~\ref{appl} we get
 \begin{align}
 \label{eq:202p90int}
\|u(t,x,y)\|_{L^{q_\theta}_{t>t_0} L^{r_\theta}_x H^{1/2+\delta}_y} 
\\\nonumber
\leq C\big (\|u(t_0)\|_{ H^1_{x,y}}+
\|u|u|^\alpha\|_{L^{\tilde{q_\theta}'}_{t>t_0} L^{\tilde{r}_\theta'}_xH_y^{1/2+\delta}}\big )
\\\nonumber
\leq C\big (\|u(t_0)\|_{ H^1_{x,y}}+
\|u\|_{L^{(1+\alpha)\tilde{q_\theta}'}_{t>t_0} L^{(1+\alpha)\tilde{r}_\theta'}_xH_y^{1/2+\delta}}^{1+\alpha}\big ),
\end{align}
where we used Lemma~\ref{TzBil}
and we have denoted by
$\|f(t)\|_{L^p_{t>t_0}}$ the integral $\int_{t_0}^\infty |f(t)|^p dt$ for any given
time-dependent function .
By combining \eqref{eq.ass9bispferro} with the H\"older inequality
we can continue the estimate \eqref{eq:202p90int} as follows: 
$$...\leq C\big(\|u(t_0)\|_{ H^1_{x,y}}+
\|u\|_{L^{q_\theta}_{t>t_0} L^{r_\theta}_xH_y^{1/2+\delta}}^{\theta(1+\alpha)}
\|u\|_{L^{\infty}_{t>t_0} L^{\alpha d/2}_xH_y^{1/2+\delta}}^{(1-\theta)(1+\alpha)}
\big ).
$$
By combining Proposition \ref{decay} with Lemma \ref{inter}, we deduce that
$$\lim_{t_0\rightarrow \infty}
\|u\|_{L^{\infty}_{t>t_0} L^{\alpha d/2}_xH_y^{1/2+\delta}}
=0,$$
and hence for every $\epsilon>0$ there exists $t_0=t_0(\epsilon)>0$ such that
$$\|u(t,x,y)\|_{L^{q_\theta}_{t>t_0} L^{r_\theta}_x H^{1/2+\delta}_y}
\leq C \|u(t_0)\|_{ H^1_{x,y}} +  \epsilon \|u\|_{L^{q_\theta}_{t>t_0} L^{r_\theta}_xH_y^{1/2+\delta}}^{\theta(1+\alpha)}.$$
We conclude by a continuity argument that 
$\|u(t,x,y)\|_{L^{q_\theta}_{t>0} L^{r_\theta}_x H^{1/2+\delta}_y}<\infty$.
By a similar argument we get 
$\|u(t,x,y)\|_{L^{q_\theta}_{t<0} L^{r_\theta}_x H^{1/2+\delta}_y}<\infty$.

 \end{proof}

 \begin{prop}\label{Impa}
 Let  $(l,p)$ be as in Lemma \ref{sTr}
 and let $u(t,x,y)$ be the unique solution to \eqref{NLS} with $4/d<\alpha<4/(d-1)$.
 Then 
 \begin{equation}\label{sett}\|u(t,x,y)\|_{L^l_t L^p_x L^2_y}
 + \|\partial_y u(t,x,y)\|_{L^l_t L^p_x L^2_y}
+\|\nabla_x u(t,x,y)\|_{L^l_t L^p_x L^2_y}
<\infty.\end{equation}
 \end{prop}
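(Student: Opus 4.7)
\medskip

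\noindent\textbf{Proof proposal.} The plan is to run a Strichartz bootstrap on the Duhamel formula, chopped over finitely many time intervals on which the norm provided by Proposition~\ref{inpa} is small. The key point is that the exponents $(l,p)$ of Lemma~\ref{sTr} are $x$-Strichartz admissible for $e^{it\Delta_x}$ and that the identities \eqref{st2} and \eqref{st3} make the H\"older pairing with the norm $L^{q_\theta}_t L^{r_\theta}_x H^{1/2+\delta}_y$ (which is finite by Proposition~\ref{inpa}) exact.

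First I would estimate the nonlinearity on an arbitrary interval $I\subset \R$. Writing $D$ for either $\partial_{x_j}$ ($j=1,\dots,d$) or $\partial_y$, and using the pointwise bound $|D(u|u|^\alpha)|\lesssim |Du|\,|u|^\alpha$ (and, for $k=0$, $|u|^{\alpha+1}\leq |u|\,\|u(\cdot)\|_{L^\infty_y}^\alpha$), together with the Sobolev embedding $H^{1/2+\delta}_y\subset L^\infty_y$, one gets for $k=0,1$
\begin{align*}
\bigl\|D^k(u|u|^\alpha)\bigr\|_{L^{l'}(I)L^{p'}_x L^2_y}
&\leq C\,\bigl\|\|D^k u\|_{L^2_y}\,\|u\|_{H^{1/2+\delta}_y}^\alpha\bigr\|_{L^{l'}(I)L^{p'}_x}.
\end{align*}
Applying H\"older in $x$ with exponent \eqref{st2} and then in $t$ with exponent \eqref{st3} yields
\begin{equation*}
\bigl\|D^k(u|u|^\alpha)\bigr\|_{L^{l'}(I)L^{p'}_xL^2_y}
\leq C\,\|D^k u\|_{L^l(I)L^p_x L^2_y}\,\|u\|_{L^{q_\theta}(I)L^{r_\theta}_xH^{1/2+\delta}_y}^\alpha.
\end{equation*}

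Next, by Proposition~\ref{inpa} the quantity $\|u\|_{L^{q_\theta}_t L^{r_\theta}_x H^{1/2+\delta}_y}$ is finite on $\R$. Hence, given any $\eta>0$, I can partition $\R=\bigcup_{j=1}^{N(\eta)} I_j$ into finitely many successive intervals $I_j=[t_j,t_{j+1}]$ on each of which $\|u\|_{L^{q_\theta}(I_j)L^{r_\theta}_x H^{1/2+\delta}_y}\leq \eta$. On such an $I_j$, Proposition~\ref{StriProd1} applied to the Duhamel formula with initial time $t_j$ (noting $(l,p)$ is admissible by \eqref{st1}) gives
\begin{equation*}
\|D^k u\|_{L^l(I_j)L^p_xL^2_y}
\leq C\|D^k u(t_j)\|_{L^2_{x,y}}
+C\eta^\alpha\,\|D^k u\|_{L^l(I_j)L^p_x L^2_y},
\end{equation*}
for $k=0,1$ and $D\in\{\partial_{x_j},\partial_y\}$. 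Choosing $\eta$ so that $C\eta^\alpha\leq 1/2$, and using conservation of $\|u(t_j)\|_{H^1_{x,y}}$ (energy and mass conservation), I absorb the last term and obtain
\begin{equation*}
\|D^k u\|_{L^l(I_j)L^p_xL^2_y}\leq 2C\sup_{t\in\R}\|u(t)\|_{H^1_{x,y}}.
\end{equation*}
Summing the $\ell^l$ contributions over the finitely many intervals $I_1,\dots,I_{N(\eta)}$ yields \eqref{sett}.

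The one delicate point to watch is that the H\"older chain above requires the exponents to match \emph{exactly}, i.e.\ one needs the identities \eqref{st2}--\eqref{st3} of Lemma~\ref{sTr} rather than the inequalities \eqref{st2bis}--\eqref{st3bis} of Lemma~\ref{sTrbis}; this is precisely why Proposition~\ref{Algebric0} (and the associated Lemma~\ref{sTr}) was designed to produce exponents linked to $(q_\theta,r_\theta)$ by equalities. Beyond this bookkeeping, the only mild subtlety is the use of the chain-rule bound $|D(u|u|^\alpha)|\lesssim |Du||u|^\alpha$ for $D=\partial_{x_j}$, which is standard since $u|u|^\alpha$ is a locally Lipschitz function of the real and imaginary parts of $u$ and $\alpha+1\geq 1$.
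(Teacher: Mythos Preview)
Your proof is correct and follows essentially the same route as the paper: both rely on Proposition~\ref{StriProd1} for the $(l,p)$-Strichartz estimate, the H\"older pairing coming from the identities \eqref{st2}--\eqref{st3} of Lemma~\ref{sTr}, and the finiteness of $\|u\|_{L^{q_\theta}_t L^{r_\theta}_x H^{1/2+\delta}_y}$ from Proposition~\ref{inpa} to absorb the nonlinear term. The only cosmetic difference is that the paper runs the bootstrap on a single tail $\{t>t_0\}$ with $t_0$ large (so that the $L^{q_\theta}_{t>t_0}L^{r_\theta}_x H^{1/2+\delta}_y$ norm is small), whereas you partition $\R$ into finitely many intervals on which that norm is small; the two are interchangeable.
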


\begin{proof}
We show $\|\partial_y u(t,x,y)\|_{L^l_t L^p_x L^2_y}
<\infty$, the other estimates are similar.
By \eqref{eq:200p1} we get 
$$\|\partial_y u(t,x,y)\|_{L^l_{t>t_0} L^p_x L^2_y}
\leq C (\|u(t_0)\|_{H^1_{x,y}} + \|(\partial_y u)|u|^\alpha\|_{L^{l'}_{t>t_0} L^{p'}_x L^2_y}).$$
By Lemma \ref{sTr} we can apply the H\"older inequality and we get
$$...\leq C (\|u(t_0)\|_{H^1_{x,y}} + \|(\partial_y u)\|_{L^l_{t>t_0} L^p_x L^2_y} 
\|u\|_{L^{q_\theta}_{t>t_0} L^{r_\theta}_x L^\infty_y}^\alpha )
$$
$$\leq C (\|u(t_0)\|_{H^1_{x,y}} + \|(\partial_y u)\|_{L^l_{t>t_0} L^p_x L^2_y} 
\|u\|_{L^{q_\theta}_{t>t_0} L^{r_\theta}_x H^{1/2+\delta}}^\alpha ).
$$
We conclude by choosing $t_0$ large enough and by 
recalling Proposition \ref{inpa}.

\end{proof}

\begin{proof}[Proof of Theorem \ref{main}]
It follows by Proposition~\ref{Impa} via a standard argument (see \cite{Caz}).
In fact by using the integral equation associated with \eqref{NLS} it is sufficient to prove that
\begin{equation}\label{kdv}\lim_{t_1, t_2\rightarrow \infty}
\|\int_{t_1}^{t_2} e^{-i s\Delta_{x, y}} (u|u|^\alpha) ds\|_{H^1_{x,y}}=0
\end{equation}
By combining Proposition \ref{StriProd1} with a duality argument
we get:
$$\|\int_{t_1}^{t_2} e^{-i s\Delta_{x, y}} F(s) ds\|_{L^2_{x,y}}
\leq C \|F\|_{L^{l'}_{(t_1, t_2)} L^{p'}_x L^2_y}
$$
where $(l, p)$ are as in Lemma \ref{sTr}. Hence
\eqref{kdv} follows provided that 
$$
\lim_{t_1, t_2\rightarrow \infty} 
\big(\|u|u|^\alpha\|_{L^{l'}_{(t_1, t_2)} L^{p'}_x L^2_y}+
\|\partial_y (u|u|^\alpha)\|_{L^{l'}_{(t_1, t_2)} L^{p'}_x L^2_y}
+ \|\nabla_x (u|u|^\alpha)\|_{L^{l'}_{(t_1, t_2)} L^{p'}_x L^2_y}\big)=0.$$
This estimate can be proved following the same argument used along the proof
of Proposition \ref{Impa}, in conjunction with \eqref{set} and \eqref{sett}.
\end{proof}

\end{document}